\renewcommand*{\backrefalt}[4]{%
    \ifcase #1 \footnotesize{(Not cited.)}%
    \or        \footnotesize{(Cited on page~#2.)}%
    \else      \footnotesize{(Cited on pages~#2.)}%
    \fi}
\newtheorem{theorem}{Theorem}[section]
\newtheorem{lemma}[theorem]{Lemma}
\newtheorem{proposition}[theorem]{Proposition}
\newtheorem{definition}{Definition}[section]
\newtheorem{remark}[theorem]{Remark}
\newcommand{\EE}{\mathbb{E}}
\newcommand{\Prob}{\textnormal{Prob}}
\newcommand{\x}{\mathbf x}
\newcommand{\sv}{\mathbf v}
\newcommand{\GCal}{\mathcal{G}}
\newcommand{\br}{\mathbb{R}}
\newcommand{\ba}{\begin{array}}
\newcommand{\ea}{\end{array}}
\newcommand{\FCal}{\mathcal{F}}
\newcommand{\XCal}{\mathcal{X}}
\newcommand{\NCal}{\mathcal{N}}
\begin{document}


\begin{center}

{\bf{\LARGE{Finite-Time Last-Iterate Convergence for Multi-Agent Learning in Games}}}

\vspace*{.2in}
{\large{
\begin{tabular}{cccc}
Tianyi Lin$^{\star, \ddagger}$ & Zhengyuan Zhou$^{\star, \square}$ & Panayotis Mertikopoulos$^\triangle$ & Michael I. Jordan$^{\diamond, \dagger}$ \\
\end{tabular}
}}

\vspace*{.2in}

\begin{tabular}{c}
Department of Electrical Engineering and Computer Sciences$^\diamond$ \\
Department of Industrial Engineering and Operations Research$^\ddagger$ \\
Department of Statistics$^\dagger$ \\ 
University of California, Berkeley \\
Stern School of Business, New York University and IBM Research$^\square$ \\
Univ. Grenoble Alpes, CNRS, Inria, LIG, 38000 Grenoble and Criteo AI lab$^\triangle$
\end{tabular}

\vspace*{.2in}

\today

\vspace*{.2in}

\begin{abstract} 
In this paper, we consider multi-agent learning via online gradient descent in a class of games called $\lambda$-cocoercive games, a fairly broad class of games that admits many Nash equilibria and that properly includes unconstrained strongly monotone games. We characterize the finite-time last-iterate convergence rate for joint OGD learning on $\lambda$-cocoercive games; further, building on this result, we develop a fully adaptive OGD learning algorithm that does not require any knowledge of problem parameter (e.g. cocoercive constant $\lambda$) and show, via a novel double-stopping time technique, that this adaptive algorithm achieves same finite-time last-iterate convergence rate as non-adaptive counterpart. Subsequently, we extend OGD learning to the noisy gradient feedback case and establish last-iterate convergence results--first qualitative almost sure convergence, then quantitative finite-time convergence rates-- all under non-decreasing step-sizes. To the best of our knowledge, we provide the first set of results that fill in several gaps of the existing multi-agent online learning literature, where three aspects -- finite-time convergence rates, non-decreasing step-sizes and fully adaptive algorithms have been unexplored before in the literature. 
\end{abstract}
\let\thefootnote\relax\footnotetext{$^\star$ Tianyi Lin and Zhengyuan Zhou contributed equally to this work.}
\end{center}

\vspace{-2em}\section{Introduction}
In its most basic incarnation, online learning~\citep{blum1998line, shalev2012online, hazan2016introduction} can be described as a feedback loop of the following form:
\begin{enumerate}
\item The agent interfaces with the environment by choosing an \emph{action} $a_t \in \mathcal{A} \subseteq \mathbf{R}^d$ (e.g., bidding in an auction, selecting a route in a traffic network).
\item The environment then yields a reward function $r_t(\cdot)$, and the agent obtains the reward $r_t(a_t)$ and receives some \emph{feedback} (e.g., reward function $r_t(\cdot)$, gradient $\nabla r_t(a_t)$, or reward $r_t(a_t)$), and the process repeats.
\end{enumerate}
As the reward functions $r_t(\cdot)$ are allowed to change from round to round, the standard metric that quantifies the performance of an online learning algorithm is that of regret~\citep{blum2007external}: at time $T$, the regret is the difference between $\max_{a\in A} \sum_{t=1}^T u_t(a)$, the total rewards achieved by the best fixed action in hindsight, and 
$\sum_{t=1}^T u_t(a_t)$, the total rewards achieved by the algorithm. 
In the rich online learning literature~\citep{Zin03, kalai2005efficient, SSS07, arora2012multiplicative, shalev2012online, hazan2016introduction}, perhaps the simplest algorithm that achieves the minimax-optimal regret guarantee is Zinkevich's online gradient descent (OGD), where the agent simply takes a gradient step (at current action) to form the next action, performing a projection if necessary. Due to its simplicity and strong performance, it is arguably one of the most widely-used algorithms in online learning theory and applications~\citep{Zin03, HAK07, quanrud2015online}.

At the same time, the most common instantiation of the above online learning model (where reward functions change arbitrarily over time) is multi-agent online learning: each agent is making online decisions in an environment that consists of other agents who are simultaneously making online decisions and whose actions impact the rewards of other agents; that is, each agent's reward is determined by an (unknown) game. Note that in multi-agent online learning, as other agents' actions change, each agent's reward function, when viewed solely as a function of its own action, also changes, despite the fact that the underlying game mechanism is fixed. Consequently, in this setting, the universality of the OGD regret bounds raises high expectations in terms of performance guarantees, leading to the following fundamental question in game-theoretical learning~\citep{cesa2006prediction, shoham2008multiagent, viossat2013no, bloembergen2015evolutionary, monnot2017limits}: \emph{Would OGD learning, and more broadly no-regret learning, lead to Nash equilibiria?}

As an example, if all users of a computer network individually follow some no-regret learning algorithm (e.g. OGD) to learn the best route for their traffic demands, would the system eventually converge to a stable traffic distribution, or would it devolve to perpetual congestion as users ping-pong between different routes (like commuters changing lanes in a traffic jam)? Note that whether the process converges at all pertains to the stability of the joint learning procedure, while whether it converges to Nash equilibria pertains to the rationality thereof: if the learning procedure converges to a non-Nash equilibrium, then each can do better by not following that learning procedure.

\paragraph{Related work.} Despite the seeming simplicity, the existing literature has only provided scarce and qualitative answers to this question. This is in part due to the strong convergence mode conveyed by the question: while a large literature exists on this topic, much of them focuses on time-average convergence (i.e. convergence of the time average of the joint action), rather than last-iterate convergence (i.e. convergence of the joint action). 
However, not only is last-iterate convergence theoretically stronger and more appealing, it is also the only type of convergence that actually describes the system's evolution. This was a well-known point that was only recently rigorously illustrated in~\citet{mertikopoulos2018cycles}, where it is shown that even though follow-the-regularized-leader (another no-regret learning algorithm) converges to a Nash equilibrium in linear zero-sum games in the sense of time-averages, actual joint action orbits Nash equilibria in perpetuity. Motivated by this consideration, a growing literature~\citep{krichene2015convergence, lam2016learning, zhou2017mirror, NIPS2017_7169, ZMBG+17-NIPS, MBNS17, zhou2017stochastic, zhou2020convergence, zhou2018learning, zhou2018multi, Mertikopoulos-2019-Learning} has devoted the efforts to obtaining last-iterate convergence results. However, due to the challenging nature of the problem, all of those lat-iterate convergence results are qualitative. In particular, except in strongly monotone games (\citet{zhou2020robust} very recently established a $O(1/T)$ last-iterate convergence rate for OGD learning with noisy feedback\footnote{The perfect gradient feedback case has a last-iterate convergence of $O(\rho^T)$, for some $0<\rho < 1$ when the game is further Lipchitz. This follows from a classical result in variational inequality~\citep{facchinei2007finite}} in strongly monotone games), there are no quantitative, finite-time last-iterate convergence rates available\footnote{Except in convex potential games. In that case, the problem of converging to Nash equilbiria reduces to a convex optimization problem, where standard techniques apply}.

Additionally, an important element in multi-agent online learning is that the horizon of play is typically unknown. As a result, no-regret learning algorithms need to be employed with a decreasing learning rate (e.g., because of a doubling trick or as a result of an explicit $O(1/t^{\alpha})$ step-size tuning). In particular, in order to achieve last-iterate convergence-to-Nash results, all of the above mentioned work rest crucially on using decreasing step-size (often converging to $0$ no slower than a particular rate) in their algorithm designs. This, however, leads to the following general tenet: \emph{New information is utilized with decreasing weights}

From a rationality point of view, this is not only counter-intuitive \textendash\ it flies at the face of established economic wisdom. Instead of \emph{discounting} past information, players end up indirectly \emph{reinforcing} it by assigning negligible weight to recent observations relative to those in the distant past. This negative recency bias is unjustifiable from economic micro-foundations and principles, and it cannot reasonably account for any plausible model of human/consumer behavior. The above naturally raises another important open question, one that, if answered, can bridge the gap between online learning and rationalizable economic micro-foundations: \emph{Is no-regret learning without discounting recent information compatible with Nash equilibria?}

\paragraph{Contributions.} Reflecting on those two gaps simultaneously, we are thus led to the following ambitious research question, one that aims to close two open questions at once: \emph{Can we obtain finite-time last-iterate convergence rate using only non-decreasing step-size?} Our goal here is to make initial but significant progress in answering this question; our contributions are threefold.

First, we introduce a class of games that we call \emph{cocoercive} and which contain all strongly monotone games as a special case. We show that if each player adopts OGD, then the joint action sequence converges in last-iterate to the set of Nash equilibria at a rate of $o(1/T)$. The convergence speed more specifically refers to how fast the gradient norm squared converges to $0$: note that in cocoercive games, gradient norm converges to $0$ if and only if the iterate converges to the set of Nash equilibria. To the best of our knowledge, this is the first rate that provides finite-time last-iterate convergence that moves beyond the strong monotonicity assumption.

Second, we study in depth the stochastic gradient feedback case, where each player adopts OGD in $\lambda$-cocoercive games, with gradient corrupted by a zero-mean, martingale-difference noise, whose variance is proportional to current gradient norm squared as assumed in the relative random noise model~\citep{Polyak-1987-Introduction}. In this more challenging setting, we first establish that the joint action sequence converges in last-iterate to Nash equilibria almost surely under a constant step-size. The previous best such qualitative convergence is due to~\citet{Mertikopoulos-2019-Learning}, which shows that such almost sure convergence is guaranteed in a variationally stable game. Despite the fact that variationally stable games contain cocoercive games as a subclass, our result is not covered by theirs because~\citet{Mertikopoulos-2019-Learning} assumes compact action set, where we consider unconstrained action set--a more challenging scenario since the action iterates can \textit{a priori} be unbounded. Our result is further unique in that constant step-size is sufficient to achieve last-iterate almost sure convergence, while~\citet{Mertikopoulos-2019-Learning} requires decreasing step-size (that is square-summable-but-not-summable). Note that the relative random noise model is necessary for obtaining such constant step-size result: in an absolute random noise model (where the noise's second moment is bounded by a constant), the gradient descent iterate forms an ergodic and irreducible Markov chains, which induces an invariant measure that is supported on the entire action set, thereby making it impossible to obtain any convergence-to-Nash result. We then proceed a step further and characterize finite-time convergence rate. We establish two rates here: first, the expected time-average convergence rate is $O(1/T)$; second the expected last-iterate convergence rate is  $O(a(T))$, where $a(T)$ depends on how fast the relative noise proportional constants decrease to $0$. As a simple example, if those constants decrease to $0$ at an $O(1/\sqrt{t})$ rate, then the last-iterate convergence rate is $O(1/\sqrt{T})$. For completeness (but due to space limitation), we also present in the appendix a parallel set of results--last-iterate almost sure convergence, time-average convergence rate and last-iterate convergence rate--for the absolute random noise model (under diminishing step-sizes of course).
 
Third, and even more surprisingly, we provide--to the best of our knowledge--the first adaptive gradient descent algorithm that has last-iterate convergence guarantees on games. In particular, the online gradient descent algorithms mentioned above--both in the deterministic and stochastic gradient case--requires the cocoercive constant $\lambda$ to be known beforehand. Thus, this calls for adaptive variants that do not require such knowledge. In the deterministic setting, we design an adaptive gradient descent algorithm that operates without needing to know $\lambda$ and adaptively chooses its step-size based on past gradients. We then show that the same $o(\frac{1}{T})$ last-iterate convergence rate can be achieved with non-decreasing step-size. Previously, the closest existing result is~\citet{Bach-2019-Universal}, which provided an adaptive algorithm on variational inequality with time-average convergence guarantees. However, providing adaptive algorithms for last-iterate convergence is much more challenging and \citet{Bach-2019-Universal} further requires the knowledge of the diameter of domain set (which they assume to be compact) in their adaptive algorithm, whereas we operate in unbounded domains. Our analysis relies on a novel double stopping time analysis, where the first stopping time characterizes the first time until gradient norm starts to monotonically decrease, and the second stopping time, after the first stopping has occured, characterizes the first time the underlying pseudo-contraction mapping starts to rapidly converge. We also provide the adaptive algorithm in the stochastic gradient feedback setting and establish the same finite-time last-iterate convergence guarantee. Note that our results only imply convergence in unconstrained strongly monotone games. Constrained coercive games is another interesting setting that would require a different set of techniques and further exploration.

\section{Problem Setup}
In this section, we present the definitions of a game with continuous action sets, which serves as a stage game and provides a reward function for each player in an online learning process. The key notion defined here is called $\lambda$-cococercivity, which is weaker than $\lambda$-strong monotonicity and covers a wider range of games.  

\subsection{Basic Definition and Notation}
Throughout this paper, we focus on games played by a finite set of \textit{players} $i \in \NCal = \{1, 2, \ldots, N\}$. During the learning process, each player selects an \textit{action} $\x_i$ from a convex subset $\XCal_i$ of a finite-dimensional vector space $\br^{n_i}$ and their reward is determined by the profile $\x = (x_1, x_2, \ldots, x_N)$ of all players' actions. Throughout the paper, $\|\cdot\|$ denotes the Euclidean norm (in the corresponding vector space): other norms can be easily accomodated in our framework of course (and different $\XCal_i$'s can in general have different norms), although we will not bother with all of this since we do not plan to play with (and benefit from) complicated geometries. 
\begin{definition}
A continuous game is a tuple $\GCal = (\NCal, \XCal = \prod_{i=1}^N \XCal_i, \{u_i\}_{i=1}^N)$, where $\NCal$ is the set of $N$ players $\{1, 2, \ldots, N\}$, $\XCal_i$ is a convex set of some finite-dimensional vector space $\br^{n_i}$ representing the action space of player $i$, and $u_i: \XCal \rightarrow \br$ is the $i$-th player's payoff function satisfying:
\begin{enumerate}
\item For each $i \in \NCal$, the function $u_i(\x)$ is continuous in $\x$.
\item For each $i \in \NCal$, the function $u_i$ is continuously differentiable in $\x_i$ and the partial gradient $\nabla_{\x_i} u_i(\x)$ is Lipschitz continuous in $\x$.
\end{enumerate}
\end{definition}

The notation $\x_{-i}$ denotes the joint action of all players but player $i$. Consequently, the joint action $\x$ will frequently be written as $(x_i; \x_{-i})$. Two important quantities are specified as follows:
\begin{definition}
$\sv(\x)$ is the profile of the players' individual payoff gradients, i.e., $\sv(\x) = (v_1(\x), \ldots, v_N(\x))$, where $v_i(\x) \triangleq \nabla_{x_i} u_i(\x)$. 
\end{definition}
We are looking at pure-Nash equilibria, because we are studying continuous games, where the action set is already a finite-dimensional vector space, rather than a finite set as in the simpler finite games in which each player’s mixed strategy is a vector of probabilities in the simplex. In our setting, each action already lives in a continuum and we follow the standard definition of a pure Nash equilibrium.
\begin{definition}
$\x^* \in \XCal$ is called a (pure-strategy) Nash equilibrium of a game $\GCal$ if for each player $i \in \NCal$, it holds true that $u_i(x_i^*, \x_{-i}^*) \geq u_i(x_i, \x_{-i}^*)$ for each $x_i \in \XCal_i$. 
\end{definition}

\begin{proposition}\label{prop:existence}
In a continuous game $\GCal$, if $\x^* \in \XCal$ is a Nash equilibrium, then $(\x - \x^*)^\top\sv(\x^*) \leq 0$ for all $\x \in \XCal$. The converse also holds true if the game is concave: for each $i \in \NCal$, the function $u_i(x_i; \x_{-i})$ is concave in $x_i$ for all $\x_{-i} \in \prod_{j \neq i} \XCal_j$. 
\end{proposition}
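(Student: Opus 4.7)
\textbf{Plan for Proposition \ref{prop:existence}.}

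For the forward direction, the plan is to derive the joint variational inequality by aggregating first-order optimality conditions across players. Fix a Nash equilibrium $\x^* \in \XCal$ and any player $i \in \NCal$. By definition, $x_i^*$ maximizes the function $\phi_i(\cdot) \triangleq u_i(\,\cdot\,, \x_{-i}^*)$ over $\XCal_i$. Since $\XCal_i$ is convex and $\phi_i$ is continuously differentiable at $x_i^*$, for any $x_i \in \XCal_i$ the segment $x_i^* + t(x_i - x_i^*)$ lies in $\XCal_i$ for $t \in [0,1]$, and taking the right derivative at $t = 0$ of $t \mapsto \phi_i(x_i^* + t(x_i - x_i^*)) - \phi_i(x_i^*) \leq 0$ yields the per-player condition $(x_i - x_i^*)^\top v_i(\x^*) \leq 0$. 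Summing these inequalities over $i \in \NCal$ gives exactly $(\x - \x^*)^\top \sv(\x^*) \leq 0$ for any $\x = (x_1,\ldots,x_N) \in \XCal$, since $\XCal = \prod_i \XCal_i$ allows each component $x_i$ to be chosen independently in $\XCal_i$.

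For the converse, assume the game is concave and the variational inequality $(\x - \x^*)^\top \sv(\x^*) \leq 0$ holds for all $\x \in \XCal$. The plan is to specialize this inequality to unilateral deviations and then invoke concavity to upgrade the first-order inequality to an inequality on payoff values. Concretely, fix $i \in \NCal$ and $x_i \in \XCal_i$, and test the variational inequality against $\x = (x_i; \x_{-i}^*) \in \XCal$, which is admissible thanks to the product structure. All but the $i$-th block vanish, leaving $(x_i - x_i^*)^\top \nabla_{x_i} u_i(\x^*) \leq 0$. Since $u_i(\,\cdot\,, \x_{-i}^*)$ is concave on $\XCal_i$, the standard concavity estimate gives $u_i(x_i, \x_{-i}^*) \leq u_i(x_i^*, \x_{-i}^*) + (x_i - x_i^*)^\top \nabla_{x_i} u_i(\x^*) \leq u_i(x_i^*, \x_{-i}^*)$, so $\x^*$ is a Nash equilibrium.

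The argument is largely mechanical; no step looks like a genuine obstacle. The only points that require care are (i) justifying the right-derivative computation at a boundary optimum $x_i^*$ via the convex combination $x_i^* + t(x_i - x_i^*)$ (which is the standard way to derive the first-order necessary condition on a convex set), and (ii) for the converse, using the product structure $\XCal = \prod_i \XCal_i$ to ensure that the unilateral deviation $(x_i; \x_{-i}^*)$ is a feasible test point in the variational inequality. No quantitative estimates are needed, and the Lipschitz continuity of $\sv$ assumed in the definition of a continuous game is not required for this proposition.
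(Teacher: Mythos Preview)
Your argument is correct and is the standard proof of this classical equivalence. The paper does not actually supply its own proof of Proposition~\ref{prop:existence}; it simply remarks that the result is classical and refers the reader to~\citet{Mertikopoulos-2019-Learning}, so there is nothing further to compare.
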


Proposition~\ref{prop:existence} is a classical result (see also~\cite{Mertikopoulos-2019-Learning} for a proof) and shows that the Nash equilibria of a concave game are precisely the solutions of the variational inequality $(\x - \x^*)^\top\sv(\x^*) \leq 0$ for all $\x \in \XCal$. 

\begin{definition}
	A continuous game $\GCal$ is:
	\begin{enumerate}
		\item \textbf{montone} if $(\x' - \x)^\top(\sv(\x') - \sv(\x)) \leq 0$ for all $\x, \x' \in \XCal$.
		\item \textbf{strictly monotone} if $(\x' - \x)^\top(\sv(\x') - \sv(\x)) \leq 0$ for all $\x, \x' \in \XCal$, with equality if and only if $\x = \x'$. 
	\end{enumerate}
\end{definition}

\begin{remark}
We briefly highlight a few well-known properties of monotone and strictly monotone games.
Monotone games are automatically concave games: a concave game (and hence a monotone game) is guaranteed to have a Nash equilibrium when all the action sets $\mathcal{X}_i$ are convex and compact. 
Otherwise, particularly in the unconstrained setting (each $\XCal_i = \br^{n_i}$), a Nash equilibrium may not exist. Many results on monotone games can be read off from the variational inequality literature~\cite{facchinei2007finite}; see all~\cite{Mertikopoulos-2019-Learning} for a detailed discussion.

In a strictly monotone game (first introduced in~\citet{Rosen-1965-Existence} and referred to as diagonal strict concave games there), at most one Nash equilibrium exists; hence when all action sets are convex and compact, a strictly monotone game admits a unique Nash equilibrium. Additionally, when $\sv = \nabla f$ for some (smooth) function $f$, $f$ is strictly concave. The notion \textit{strictly} refers to the \textit{only if} requirement in the condition. Many useful results regarding strictly monotone games (under convex and compact action sets) can be found in~\citet{Rosen-1965-Existence}.
\end{remark}

Proceeding a step further, we can define strongly monotone games:

\begin{definition}
A continuous game $\GCal$ is called $\lambda$-strongly monotone if the payoff strongly monotone condition holds: $(\x' - \x)^\top(\sv(\x') - \sv(\x)) \leq -\lambda\|\x' - \x\|^2$ for all $\x, \x' \in \XCal$. 
\end{definition}

Note that strongly monotone games are a subclass of strictly monotone games. One appealing feature of strongly monotone games is that the finite-time convergence rate can be derived in terms of $\|\x_t - \x^*\|$, where $\x^*$ is the unique Nash equilibrium~\citep{zhou2020robust} (under convex and compact action sets). On the other hand, $\x_t$ can possibly converge to a limit cycle or repeatedly hit the boundary in monotone games~\cite{Mertikopoulos-2018-Cycles, Daskalakis-2018-Training} despite that the time-average $(\sum_{j=1}^t \x_j)/t$ converges. More recently, \citet{Mertikopoulos-2019-Learning} analyzed online mirror descent (OMD) learning (which contains OGD as a special case) in variational stable games (under convex and compact action sets) and proved that last-iterate convergence to Nash equilibria holds almost surely in the presence of imperfect feedback (i.e. gradient corrupted by an unbiased noise). This result is surprising since the notion of variational stablitly is much weaker than strict monotonicity (hence qualitative convergence to Nash in strictly monotone games is guaranteed), showing that strong monotonicity is unnecessary for last-iterate convergence of OGD learning.

However, there are no last-iterate convergence rates available for strictly monotone games (unconstrained or constrained), and such a result does not seem possible because the strictness gap can be made arbitrarily small (and yielding arbitrarily slow rates). In fact, without the quadratic growth of strong monotonicity, it seems impossible to attain the rate for $\|\x_t - \x^*\|$ and, moreover, using the method with a constant step-size is completely out of reach of the techniques of~\citet{zhou2020robust}. Finally, we remark that fully adaptive and parameter-free learning methods are also missing from game-theoretic analyses to date.

\subsection{$\lambda$-Cococercive Games}

\begin{definition}
A continuous game $\GCal$ is called $\lambda$-cocoercive if the payoff cocoercive condition holds: $(\x' - \x)^\top(\sv(\x') - \sv(\x)) \leq -\lambda\|\sv(\x') - \sv(\x)\|^2$ for all $\x, \x' \in \XCal$. 
\end{definition}
In this paper, we focus on the unconstrained $\lambda$-cocoercive game in which $\XCal_i = \br^{n_i}$, while the existing literature invariably assumes compactness, which is the constrained setting. The analysis for these two settings should be considered as complementary (and results in~\citet{zhou2020robust} would not apply to unconstrained strongly monotone cases). We study the unconstrained setting (which is another indication that new techniques different from previous work are needed) because the adaptive algorithms and analyses are easier to present. Three important comments are in order.

First, a cocoercive game is a monotone game (as can be easily seen from the definitions), but cocoercive games neither contain nor belong to strictly monotone games. 
When a Nash equilibrium exists in a cocoercive game, it may not be unique; further, all Nash equilibria of
a cococercive game shares the same individual payoff gradient (either constrained or unconstrained): neither of these two properties hold in a strictly monotone game.
For a simple one-player example where the cost function $f(x) = x^2$ for $x<0$ and $0$ otherwise: this game is cocoercive but not strictly monotone. Moreover, $(\x' - \x)^\top(\sv(\x') - \sv(\x)) = 0$ only implies that $\sv(\x') = \sv(\x)$ and $\x' = \x$ does not necessarily hold true.

Second, the unconstrained cocoercive games may not always have a Nash equilibrium since we lifted the compactness assumption. Accordingly, all of our subsequent convergence results are stated for games that do have Nash equilibria: we did so because we want our results to apply to all cocoercive games that have Nash equilibria. That said, many additional sufficient conditions can be imposed on a cocoercive game to ensure the existence of a Nash equilrium. One such sufficient condition is the coercivity of the costs: the costs go to infinity as joint actions go to infinity (as already alluded to, we didn’t assume both cocoercivity and coercivity because that would eliminate other cocoercive games that admit Nash equilibria, which is more restrictive).

Thirdly, we remark that it is more difficult to analyze the convergence property of online algorithms in unconstrained setting, especially when the feedback information is noisy, since the iterates are not necessarily assumed to be bounded. Further, since in a cocoercive game, $\x^* \in \XCal^*$ is a Nash equilibrium if and only if $\sv(\x^*) = 0$, the natural candidate for measuring convergence (i.e. optimality gap)
is  $\epsilon(\x) = \|\sv(\x)\|^2$. 

\subsection{Learning via Online Gradient Descent}
We describe the \textit{online gradient descent} (OGD) algorithm in our game-theoretical setting. Intuitively, the main idea is: At each stage, every player $i \in \NCal$ gets an estimate $\hat{v}_i$ of the individual gradient of their payoff function at current action profile, possibly subject to noise and uncertainty. Subsequently, they choose an action $x_i$ for the next stage using the current action and feedback $\hat{v}_i$, and continue playing.

Formally, starting with some arbitrarily (and possibly uninformed) iterate $\x_0 \in \br^n$ at $t=0$, the scheme can be described via the recursion  
\begin{equation}\label{Update:OGD-learning}
x_{i, t+1} \ = \ x_{i,t} + \eta_{t+1} \hat{v}_{i,t+1},  
\end{equation}
where $t \geq 0$ denotes the stage of process, $\hat{v}_{i,t}$ is an estimate of the individual payoff gradient $v_i(\x_t)$ of player $i$ at stage $t$. The learning rate $\eta_t > 0$ is a nonincreasing sequence which can be of the form $c/t^p$ for some $p \in [0, 1]$. 

\paragraph{Feedback and uncertainty.} We assume that each player $i \in \NCal$ has access to a ``black box" feedback mechanism -- an \textit{oracle} -- which returns an estimate of their payoff gradients at their current action profile. This information can be imperfect for a multitude of reasons; see~\citet[Section~3.1]{Mertikopoulos-2019-Learning}. With all this in mind, we consider the following noisy feedback model: 
\begin{equation}\label{model:noisy}
\hat{v}_{i,t+1} \ = \ v_i(\x_t) + \xi_{i, t+1}, 
\end{equation}
where the noise process $\xi_t = (\xi_{i, t})_{i \in \NCal}$ is an $L^2$-bounded martingale difference adapted to the history $(\FCal_t)_{t \geq 1}$ of $\x_t$ (i.e., $\xi_t$ is $\FCal_t$-measurable but $\xi_{t+1}$ isn't). 

We focus on two types of random noise proposed by~\cite{Polyak-1987-Introduction}. The first type is called \textbf{relative random noise}: 
\begin{equation}\label{model:noisy-relative}
\EE[\xi_{t+1} \mid \FCal_t] = 0, \quad \EE[\|\xi_{t+1}\|^2 \mid \FCal_t] \leq \tau_t\|\sv(\x_t)\|^2. 
\end{equation}
and the second type is called \textbf{absolute random noise}: 
\begin{equation}\label{model:noisy-absolute}
\EE[\xi_{t+1} \mid \FCal_t] = 0, \quad \EE[\|\xi_{t+1}\|^2 \mid \FCal_t] \leq \sigma_t^2,  
\end{equation}
The above condition is mild (the i.i.d. condition is not imposed) and allows for a broad range of error processes. For the relative random noise, the variance decreases as it approaches a Nash equilibrium which admits better convergence rate of learning algorithms.

\section{Convergence under Perfect Feedback}\label{sec:perfect_relative}
In this section, we analyze the convergence property of OGD learning under perfect feedback. In particular, we show that the finite-time last-iterate convergence rate is $o(1/T)$ regardless of fully adaptive learning rates. To our knowledge, the proof techniques for analyzing adaptive OGD learning is new and can be of independent interests.  

\subsection{OGD Learning}
We first provide a lemma which shows that $\|\sv(\x_t)\|^2$ is nonnegative, nonincreasing and summable. 
\begin{lemma}\label{Lemma:OGD-nonadaptive}
Fix a $\lambda$-cocoercive game $\GCal$ with continuous action spaces $(\NCal, \XCal = \prod_{i=1}^N \br^{n_i}, \{u_i\}_{i=1}^N)$ with an nonempty set of Nash equilibrium $\XCal^*$. Under the condition that $\eta_t = \eta \in (0, \lambda]$, the OGD iterate $\x_t$ satisfies for all $t \geq 0$  that $\|\sv(\x_{t+1})\| \leq \|\sv(\x_t)\|$ and 
\begin{eqnarray*}
\|\x_t - \Pi_{\XCal^*}(\x_0)\| & \leq & \|\x_0 - \Pi_{\XCal^*}(\x_0)\|, \\
\sum_{t=0}^{+\infty} \|\sv(\x_t)\|^2 & \leq & \frac{\|\x_0 - \Pi_{\XCal^*}(\x_0)\|^2}{\eta\lambda}.
\end{eqnarray*}
\end{lemma}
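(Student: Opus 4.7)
The plan is to exploit $\lambda$-cocoercivity twice: once at the pair $(\x^*,\x_t)$ to drive $\x_t$ toward a Nash equilibrium and extract square-summability of $\|\sv(\x_t)\|$, and once at the pair $(\x_t,\x_{t+1})$ to show that the OGD map is nonexpansive on the gradient norm and hence that $\|\sv(\x_t)\|$ is monotone nonincreasing. A single algebraic expansion --- namely $\|(\x - \x') + \eta(\sv(\x) - \sv(\x'))\|^2$ --- will be used for both purposes.

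First I would fix $\x^* \triangleq \Pi_{\XCal^*}(\x_0)$ and observe that in the unconstrained setting Proposition~\ref{prop:existence} forces $\sv(\x^*) = 0$, since the variational inequality $(\x - \x^*)^\top \sv(\x^*) \leq 0$ holding for every $\x \in \br^n$ is equivalent to $\sv(\x^*) = 0$. Expanding the squared distance along the update $\x_{t+1} = \x_t + \eta\sv(\x_t)$ and invoking $\lambda$-cocoercivity at $(\x_t, \x^*)$ together with $\sv(\x^*) = 0$ gives
\[
\|\x_{t+1} - \x^*\|^2 \leq \|\x_t - \x^*\|^2 - \eta(2\lambda - \eta) \|\sv(\x_t)\|^2.
\]
For $\eta \in (0,\lambda]$ the coefficient satisfies $\eta(2\lambda - \eta) \geq \eta\lambda > 0$, so a direct induction on $t$ gives the Fej\'er bound $\|\x_t - \x^*\| \leq \|\x_0 - \x^*\|$, and telescoping the same inequality over $t$ produces $\sum_{t=0}^{\infty} \|\sv(\x_t)\|^2 \leq \|\x_0 - \x^*\|^2/(\eta\lambda)$.

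For the monotone decay $\|\sv(\x_{t+1})\| \leq \|\sv(\x_t)\|$, I would apply exactly the same algebraic expansion to the difference of consecutive iterates, with cocoercivity now invoked at the pair $(\x_{t+1}, \x_t)$. This yields
\[
\|\x_{t+2} - \x_{t+1}\|^2 \leq \|\x_{t+1} - \x_t\|^2 - \eta(2\lambda - \eta)\|\sv(\x_{t+1}) - \sv(\x_t)\|^2 \leq \|\x_{t+1} - \x_t\|^2.
\]
Substituting the identity $\x_{s+1} - \x_s = \eta\sv(\x_s)$ on both sides and cancelling the common factor $\eta^2$ yields $\|\sv(\x_{t+1})\| \leq \|\sv(\x_t)\|$, which is the remaining claim.

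There is no serious technical obstacle: each bound reduces to a single application of the cocoercivity inequality. The one delicate point worth flagging is the reduction $\sv(\x^*) = 0$, which is what converts Proposition~\ref{prop:existence} into a clean Fej\'er-type descent inequality and uses unconstrainedness of the action space in an essential way; in a constrained cocoercive game $\sv(\x^*)$ would only be forced into the outward normal cone at $\x^*$, and the expansion above would have to be reorganized around the projection step, which is precisely the complication that Section~\ref{sec:perfect_relative} is designed to sidestep.
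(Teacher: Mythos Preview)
Your proposal is correct and follows essentially the same strategy as the paper: cocoercivity at the pair $(\x_{t+1},\x_t)$ for the monotone decay of $\|\sv(\x_t)\|$, and cocoercivity at $(\x_t,\x^*)$ together with $\sv(\x^*)=0$ for the Fej\'er bound and square-summability. The only cosmetic difference is that for the second part the paper routes the argument through the three-point identity $(\x_{t+1}-\x^*)^\top\sv(\x_t)=\tfrac{1}{2\eta}(\|\x_t-\x_{t+1}\|^2+\|\x^*-\x_{t+1}\|^2-\|\x^*-\x_t\|^2)$ and a Young-inequality split, arriving at $\lambda\|\sv(\x_t)\|^2\le(\|\x^*-\x_t\|^2-\|\x^*-\x_{t+1}\|^2)/\eta$, whereas you expand $\|\x_{t+1}-\x^*\|^2$ directly and obtain the slightly sharper descent term $\eta(2\lambda-\eta)\|\sv(\x_t)\|^2$; your route is shorter for constant $\eta$, while the paper's decomposition is the template it reuses for the variable/adaptive step-size analysis in Theorem~\ref{Thm:OGD-adaptive}.
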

\begin{remark}
For the OGD learning with perfect feedback and constant step-size, the update formula in Eq.~\eqref{Update:OGD-learning} implies that $\|\sv(\x_t)\|^2 = \|\x_t - \x_{t+1}\|^2/\eta^2$. This implies that $\|\x_t - \x_{t+1}\|^2$ also serves as the candidate for an optimality gap function. Such quantity is called the \textit{iterative gap} and frequently used to construct the stopping criteria in practice. 
\end{remark}
Now we are ready to present our main results on the last-iterate convergence rate of OGD learning. 
\begin{theorem}\label{Theorem:OGD-nonadaptive}
Fix a $\lambda$-cocoercive game $\GCal$ with continuous action spaces $(\NCal, \XCal = \prod_{i=1}^N \br^{n_i}, \{u_i\}_{i=1}^N)$ with an nonempty set of Nash equilibrium $\XCal^*$. Under the condition that $\eta_t = \eta \in (0, \lambda]$, the OGD iterate $\x_t$ satisfies that $\epsilon(\x_T) = o(1/T)$. 
\end{theorem}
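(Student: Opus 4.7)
The plan is to reduce Theorem~\ref{Theorem:OGD-nonadaptive} to a standard fact about non-increasing summable sequences, exploiting the two conclusions of Lemma~\ref{Lemma:OGD-nonadaptive} directly. Set $a_t := \|\sv(\x_t)\|^2 = \epsilon(\x_t)$. Lemma~\ref{Lemma:OGD-nonadaptive} supplies exactly what is needed about the sequence $\{a_t\}_{t \geq 0}$: it is nonnegative, monotonically non-increasing (since $\|\sv(\x_{t+1})\| \leq \|\sv(\x_t)\|$), and summable with
\[
\sum_{t=0}^{+\infty} a_t \ \leq \ \frac{\|\x_0 - \Pi_{\XCal^*}(\x_0)\|^2}{\eta \lambda} \ < \ +\infty.
\]
So all the game-theoretic content is already encoded in Lemma~\ref{Lemma:OGD-nonadaptive}, and the remaining work is purely a tail-sum argument.

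The key step is the following elementary observation. Because $a_t$ is non-increasing, for any integer $T \geq 2$ and any $t$ with $\lfloor T/2 \rfloor \leq t \leq T-1$, we have $a_T \leq a_t$, and hence
\[
\lfloor T/2 \rfloor \cdot a_T \ \leq \ \sum_{t=\lfloor T/2 \rfloor}^{T-1} a_t.
\]
The right-hand side is the tail of a convergent series, so it vanishes as $T \to \infty$ by Cauchy's criterion. Dividing by $\lfloor T/2 \rfloor$ and multiplying by $T$ (using $T / \lfloor T/2 \rfloor \leq 3$ for $T \geq 2$), we obtain $T a_T \to 0$, i.e.\ $\epsilon(\x_T) = o(1/T)$, which is precisely the claim.

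There is no real obstacle here; the only potential subtlety is making sure the constant $\eta \in (0, \lambda]$ is in the regime where Lemma~\ref{Lemma:OGD-nonadaptive} applies, so that both the non-increasing property of $\|\sv(\x_t)\|$ and the summability estimate are available simultaneously. Once those are in hand, no further use of the cocoercivity condition or of the game structure is needed, and in particular the proof does not rely on existence of a limit point of $\{\x_t\}$ or on any identification of a specific Nash equilibrium. This is conceptually important: the $o(1/T)$ rate on the squared gradient norm is a structural consequence of the ``monotone plus summable'' pair of facts, which is why the same template will later be adaptable to the noisy and fully adaptive variants developed in subsequent sections.
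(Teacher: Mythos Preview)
Your proof is correct and follows essentially the same route as the paper: both invoke Lemma~\ref{Lemma:OGD-nonadaptive} to obtain a nonnegative, nonincreasing, summable sequence and then use the standard tail-sum trick (the paper uses the window $[T,2T-1]$, you use $[\lfloor T/2\rfloor,T-1]$) to conclude $T\,\epsilon(\x_T)\to 0$. The only difference is cosmetic indexing; no additional ideas are involved on either side.
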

\begin{proof}
Lemma~\ref{Lemma:OGD-nonadaptive} implies that $\{\|\sv(\x_t)\|^2\|\}_{t \geq 0}$ is nonnegative, nonincreasing and $\sum_{t=0}^{+\infty} \|\sv(\x_t)\|^2 < +\infty$. Therefore,
\begin{equation*}
T\|\sv(\x_{2T-1})\|^2 \leq \sum_{t=T}^{2T-1} \|\sv(\x_t)\|^2 \ \rightarrow \ 0 \ \textnormal{as} \  T \rightarrow +\infty. 
\end{equation*}
which implies that $\|\sv(\x_T)\|^2 = o(1/T)$. By the definition of $\epsilon(\x)$, we conclude the desired result. 
\end{proof}
\begin{algorithm}[!t]
\caption{Adaptive Online Gradient Descent}\label{algorithm:AdaOGD}
\begin{algorithmic}[1]
\STATE \textbf{Initialization:} $\x_0 \in \br^n$, $\eta_1 = 1/\beta_1$ for some $\beta_1 > 0$ and tuning parameter $r > 1$.   
\FOR{$t = 0, 1, 2, \ldots$}
\FOR{$i = 1, 2, \ldots, N$}
\STATE $x_i^{t+1} \leftarrow x_i + \eta_{t+1}\sv(\x_t)$.
\ENDFOR 
\IF{$\|\sv(\x_{t+1})\| > \|\sv(\x_t)\|$}
\STATE $\beta_{t+2} \leftarrow r\beta_{t+1}$. 
\ELSE 
\STATE $\beta_{t+2} \leftarrow \beta_{t+1}$.   
\ENDIF
\STATE $\eta_{t+2} \leftarrow 1/\sqrt{\beta_{t+2} + \sum_{j=0}^t \|\sv(\x_j)\|^2}$. 
\ENDFOR
\end{algorithmic}
\end{algorithm}
\subsection{Adaptive OGD Learning}
Our main results in this subsection is the last-iterate convergence rate of Algorithm~\ref{algorithm:AdaOGD}. Here the algorithm requires no prior knowledge of $\lambda$ but still achieves the rate of $o(1/T)$. To facilitate the readers, we summarize the results in the following theorem and provide the detailed proof. 
\begin{theorem}\label{Thm:OGD-adaptive}
Fix a $\lambda$-cocoercive game $\GCal$ with continuous action spaces $(\NCal, \XCal = \prod_{i=1}^N \br^{n_i}, \{u_i\}_{i=1}^N)$ with an nonempty set of Nash equilibrium $\XCal^*$. The adaptive OGD iterate $\x_t$ satisfies that $\epsilon(\x_T) = o(1/T)$. 
\end{theorem}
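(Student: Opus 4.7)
The plan is to prove Theorem~\ref{Thm:OGD-adaptive} via a double-stopping-time argument, with the two stopping times playing complementary roles. The first stopping time
\[
T_1 := \min\{t \geq 0 : \beta_{s+2} = \beta_{t+2} \text{ for all } s \geq t\}
\]
marks the moment beyond which the side variable $\beta_t$ produced by Algorithm~\ref{algorithm:AdaOGD} remains constant, and by the update rule this is equivalent to the first moment after which $\{\|\sv(\x_t)\|\}$ becomes monotonically non-increasing. To show $T_1 < +\infty$, I would argue that every increase of $\beta$ multiplies it by the fixed factor $r > 1$, and as soon as $\beta_{t+1} \geq 1/(4\lambda^2)$ one has $\eta_{t+1} \leq 1/\sqrt{\beta_{t+1}} \leq 2\lambda$; the cocoercive manipulation underlying Lemma~\ref{Lemma:OGD-nonadaptive} then gives $\|\sv(\x_{t+1})\| \leq \|\sv(\x_t)\|$, so $\beta$ cannot grow further. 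Hence $\beta$ stabilizes at a deterministic value $B < +\infty$ after at most $\lceil \log_r(1/(4\lambda^2 \beta_1)) \rceil$ increases.

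Writing $S_t := \sum_{j=0}^t \|\sv(\x_j)\|^2$, for $t \geq T_1$ the step-size takes the clean form $\eta_{t+1} = 1/\sqrt{B + S_{t-1}}$, which is non-increasing in $t$. I would then introduce the second stopping time $T_2 \geq T_1$ defined as the first index with $\eta_{t+1} \leq \lambda$. If $T_2 = +\infty$, then $\eta_{t+1} > \lambda$ for all $t \geq T_1$ forces $S_{t-1} < 1/\lambda^2 - B$ uniformly, so $\sum_t \|\sv(\x_t)\|^2 < +\infty$ is immediate; combined with the monotonicity after $T_1$, the $T\|\sv(\x_{2T-1})\|^2 \leq \sum_{t=T}^{2T-1} \|\sv(\x_t)\|^2$ argument from the proof of Theorem~\ref{Theorem:OGD-nonadaptive} yields $\epsilon(\x_T) = o(1/T)$.

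The substantive case is $T_2 < +\infty$. For every $t \geq T_2$ and any Nash equilibrium $\x^* \in \XCal^*$ (which satisfies $\sv(\x^*) = 0$ since the game is cocoercive), standard cocoercive bookkeeping yields
\[
\|\x_{t+1} - \x^*\|^2 \leq \|\x_t - \x^*\|^2 - \eta_{t+1}(2\lambda - \eta_{t+1})\|\sv(\x_t)\|^2 \leq \|\x_t - \x^*\|^2 - \lambda\eta_{t+1}\|\sv(\x_t)\|^2.
\]
The key step I would use to handle the fact that $\eta_{t+1}$ is now genuinely time-varying is the integral-style inequality
\[
\lambda\eta_{t+1}\|\sv(\x_t)\|^2 \;=\; \frac{\lambda(S_t - S_{t-1})}{\sqrt{B + S_{t-1}}} \;\geq\; 2\lambda\bigl(\sqrt{B + S_t} - \sqrt{B + S_{t-1}}\bigr),
\]
which follows from the monotonicity of $x \mapsto 1/\sqrt{B+x}$. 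Telescoping both sides from $T_2$ to $T$ then bounds $\sqrt{B + S_T}$ uniformly in $T$ by $\sqrt{B + S_{T_2-1}} + \|\x_{T_2} - \x^*\|^2/(2\lambda)$, so $\sum_t \|\sv(\x_t)\|^2 < +\infty$. Replaying the $T\|\sv(\x_{2T-1})\|^2 \leq \sum_{t=T}^{2T-1}\|\sv(\x_t)\|^2$ argument once more delivers $\epsilon(\x_T) = o(1/T)$.

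The main obstacle I anticipate is exactly the bootstrap in the third paragraph: since $\eta_{t+1}$ is itself defined through $S_{t-1}$, any attempt to control $\sum_t\|\sv(\x_t)\|^2$ by factoring out a uniform lower bound on $\eta_{t+1}$ is circular, and it is the integral trick that breaks this circularity and lets a contraction-style argument survive in the adaptive regime. The remaining ingredients -- finiteness of $T_1$, monotonicity of the gradient norm after $T_1$, and the $\sum_{t=T}^{2T-1}$ estimate -- closely mirror the non-adaptive proof of Theorem~\ref{Theorem:OGD-nonadaptive} and should be essentially routine.
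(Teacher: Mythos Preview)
Your argument is correct and is genuinely different from (and, in the deterministic case, cleaner than) the paper's. Both proofs run a double-stopping-time scheme, but the roles are reversed and the core technical device differs. The paper first defines $t^*=\max\{t:\eta_{t+1}>\lambda\}$, splits into $t^*=\infty$ and $t^*<\infty$, and in the latter case bounds the iterates by a constant $D$, introduces a \emph{second} threshold $t_1^*=\max\{t:\eta_{t+1}>\lambda/(2D^2+1)\}$, and then controls $\sum_t\|\sv(\x_t)\|^2$ via the Bach--Levy technical lemmas (Lemmas~\ref{Lemma:OGD-adaptive-first}--\ref{Lemma:OGD-adaptive-second}) applied to the two pieces $\mathrm{I}=D^2/\eta_{T+1}$ and $\mathrm{II}$. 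You instead first freeze $\beta$ (your $T_1$, using that at most $\lceil\log_r(1/(4\lambda^2\beta_1))\rceil$ bumps can occur since $\eta_{t+1}\le 2\lambda$ already forces monotonicity of $\|\sv(\x_t)\|$), and only then introduce the step-size threshold $T_2$; in the contentful case $T_2<\infty$ you bypass the Bach--Levy machinery entirely with the one-line integral inequality
\[
\frac{S_t-S_{t-1}}{\sqrt{B+S_{t-1}}}\;\ge\;2\bigl(\sqrt{B+S_t}-\sqrt{B+S_{t-1}}\bigr),
\]
which telescopes against $\|\x_{t+1}-\x^*\|^2-\|\x_t-\x^*\|^2$ and directly bounds $\sqrt{B+S_T}$. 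What you gain is a self-contained, more elementary proof with no need for the iterate bound $D$, no second threshold $t_1^*$, and no external lemmas. What the paper's route buys is modularity: the Bach--Levy lemmas are reused verbatim in the noisy adaptive proof (Theorem~\ref{Thm:OGD-adaptive-relative}), where the step-size depends on a random $\Delta\x_{t+1}$ and your clean telescoping would have to pass through expectations in a less straightforward way. One small remark: your appeal to ``the cocoercive manipulation underlying Lemma~\ref{Lemma:OGD-nonadaptive}'' for $\eta_{t+1}\le 2\lambda$ is correct, but note that the lemma as stated only asserts monotonicity for $\eta\le\lambda$; you are (legitimately) using the sharper range $\eta\le 2\lambda$ that the same one-step computation yields.
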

\begin{proof}
Since the step-size sequence $\{\eta_t\}_{t \geq 1}$ is nonincreasing, we define the first iconic time in our analysis as follows,  
\begin{equation*}
t^* = \max\left\{t \geq 0 \mid \eta_{t+1} > \lambda\right\}. 
\end{equation*}
In what follows, we prove the last-iterate convergence rate for two cases: $t^* = +\infty$ (\textbf{Case I}) and $t^* < +\infty$ (\textbf{Case II}). 

\paragraph{Case I.} First, we have $1/\lambda^2 - \beta_0 \geq 0$ since $\eta_0 > \lambda$. Note that $\beta_{t+2} \leftarrow r\beta_{t+1}$ with $r > 1$ is updated when $\|\sv(\x_{t+1})\| > \|\sv(\x_t)\|$, there exists $T_0 > 0$ such that $\|\sv(\x_{t+1})\| \leq \|\sv(\x_t)\|$ for all $t \geq T_0$. If not, then $\beta_t \rightarrow +\infty$ as $t \rightarrow +\infty$ and $\eta_t \rightarrow 0$. However, $t^* = +\infty$ implies that $\eta_{t+1} \geq \lambda$ for all $t \geq 1$. This leads to a contradiction. Furthermore, it holds true for all $t \geq 0$ that
\begin{equation*}
\sum_{j = 0}^t \|\sv(\x_j)\|^2 \ \leq \ \frac{1}{\lambda^2} - \beta_{t+2} \ \leq \ \frac{1}{\lambda^2} - \beta_0 \ < \ +\infty.  
\end{equation*}
By starting the sequence at a later index $T_0$, we have $\sum_{t \geq T_0} \|\sv(\x_t)\|^2 < +\infty$ and $\|\sv(\x_{t+1})\| \leq \|\sv(\x_t)\|$ for all $t \geq T_0$. Using the same argument as in Theorem~\ref{Theorem:OGD-nonadaptive}, the adaptive OGD iterate $\x_t$ satisfies that $\epsilon(\x_T) = o(1/T)$. 

\paragraph{Case II.} First, we claim that $\|\x_t - \Pi_{\XCal^*}(\x_{t^*})\| \leq D$ where $D = \max_{1 \leq t \leq t^*} \|\x_t - \Pi_{\XCal^*}(\x_{t^*})\|$. Indeed, it suffices to show that $\|\x_t - \Pi_{\XCal^*}(\x_{t^*})\| \leq \|\x_{t^*} - \Pi_{\XCal^*}(\x_{t^*})\|$ holds for $t > t^*$. By the definition of $t^*$, we have $\eta_{t+1} \leq \lambda$ for all $t > t^*$. The desired inequality follows from Lemma~\ref{Lemma:OGD-nonadaptive}. 

Using the update formula (cf. Eq.~\eqref{Update:OGD-learning}), we have
\begin{equation*}
(\x_{t+1} - \x^*)^\top\sv(\x_t) \ = \ \frac{1}{2\eta_{t+1}}\left(\|\x_t - \x_{t+1}\|^2 + \|\x^* - \x_{t+1}\|^2 - \|\x^* - \x_t\|^2\right).
\end{equation*}
Using the update formula of OGD learning, we have
\begin{equation}\label{inequality-OGD-adaptive-second}
\lambda\|\sv(\x_t)\|^2 \ \leq \ \frac{1}{\eta_{t+1}}\left(\|\x^* - \x_t\|^2 - \|\x^* - \x_{t+1}\|^2\right) + \left(\frac{1}{\lambda} - \frac{1}{\eta_{t+1}}\right)\|\x_t - \x_{t+1}\|^2. 
\end{equation}
Summing up Eq.~\eqref{inequality-OGD-adaptive-second} over $t = 0, 1, 2, \ldots, T$ yields that 
\begin{equation*}
\sum_{t=0}^T \lambda\|\sv(\x_t)\|^2 \ \leq \ \sum_{t=1}^T \|\x^* - \x_t\|^2\left(\frac{1}{\eta_{t+1}} - \frac{1}{\eta_t}\right) + \frac{\|\x^* - \x_0\|^2}{\eta_1} + \sum_{t=0}^T \left(\frac{1}{\lambda} - \frac{1}{\eta_{t+1}}\right)\|\x_t - \x_{t+1}\|^2. 
\end{equation*}
Since the step-size sequence $\{\eta_t\}_{t \geq 1}$ is nonincreasing, we have $1/\eta_{t+1} \geq 1/\eta_t$. Letting $\x^* = \Pi_{\XCal^*}(\x_{t^*})$, we notice that $\|\x^* - \x_t\| \leq D$ for all $0 \leq t \leq T$. Putting these pieces together yields that 
\begin{equation*}
\sum_{t=0}^T \lambda\|\sv(\x_t)\|^2 \ \leq \ \frac{D^2}{\eta_{T+1}} + \sum_{t=0}^T \left(\frac{1}{\lambda} - \frac{1}{\eta_{t+1}}\right)\|\x_t - \x_{t+1}\|^2. 
\end{equation*}
To proceed, we define the second iconic time as 
\begin{equation*}
t_1^* \ = \ \max\left\{t \geq 0 \mid \eta_{t+1} > \frac{\lambda}{2D^2+1}\right\} \ > \ t^*.  
\end{equation*}
Suppose that $t_1^* = +\infty$, it is straightforward to show that the adaptive OGD iterate $\x_t$ satisfies that $\epsilon(\x_T) = o(1/T)$ using the same argument in \textbf{Case I}. 

Next, we consider $t_1^* < +\infty$. Indeed, we recall that $\eta_{t+1} \leq \lambda$ for all $t > t_1^*$ which implies that $1/\lambda - 1/\eta_{t+1} \leq 0$. Since $\|\x_t - \x_{t+1}\|^2 = \eta_{t+1}^2\|\sv(\x_t)\|^2$ (cf. Eq.~\eqref{Update:OGD-learning}) and assume $T$ sufficiently large without loss of generality, we have
\begin{equation*}
\sum_{t=0}^T \lambda\|\sv(\x_t)\|^2 \ \leq \ \frac{D^2}{\eta_{T+1}} + \sum_{t=0}^{t_1^*} \frac{\eta_{t+1}^2\|\sv(\x_t)\|^2}{\lambda} \ = \ \text{I} + \text{II}. 
\end{equation*}
Before bounding term $\text{I}$ and $\text{II}$, we present two technical lemmas which is crucial to our subsequent analysis; see~\citep[Lemma~A.1 and~A.2]{Bach-2019-Universal} for the detailed proof. 
\begin{lemma}\label{Lemma:OGD-adaptive-first}
For a sequence of numbers $a_0, a_1, \ldots, a_n \in [0, a]$ and $b \geq 0$, the following inequality holds: 
\begin{equation*}
\sqrt{b + \sum_{i=0}^{n-1} a_i} - \sqrt{b} \ \leq \ \sum_{i=0}^n \frac{a_i}{\sqrt{b + \sum_{j=0}^{i-1} a_j}} \ \leq \ \frac{2a}{\sqrt{b}} + 3\sqrt{a} + 3\sqrt{b + \sum_{i=0}^{n-1} a_i}. 
\end{equation*}
\end{lemma}
\begin{lemma}\label{Lemma:OGD-adaptive-second}
For a sequence of numbers $a_0, a_1, \ldots, a_n \in [0, a]$ and $b \geq 0$, the following inequality holds: 
\begin{equation*}
\sum_{i=0}^n \frac{a_i}{b + \sum_{j=0}^{i-1} a_j} \leq \ 2 + \frac{4a}{b} + 2\log\left(1 + \sum_{i=0}^{n-1} \frac{a_i}{b}\right). 
\end{equation*}
\end{lemma}
\paragraph{Bounding term $\text{I}$:} By the definition of $t_1^*$ and Lemma~\ref{Lemma:OGD-nonadaptive}, we have $\beta_t = \beta_{t_1^*+1}$ for all $t > t_1^*$. Thus, we derive from the definition of $\eta_t$ that 
\begin{equation*}
\text{I} \leq D^2\sqrt{\beta_{T+1} + \sum_{j=0}^{T-1} \|\sv(\x_j)\|^2} \leq D^2\sqrt{\beta_{t_1^*+1} + \sum_{j=0}^{T-1} \|\sv(\x_j)\|^2} 
\end{equation*}
Since $\|\x_t - \Pi_{\XCal^*}(\x_{t^*})\| \leq D$ for all $t \geq 0$. Since the notion of $\lambda$-cocercivity implies the notion of $(1/\lambda)$-Lipschiz continuity, we have
\begin{equation}\label{inequality-OGD-adaptive-bound}
\|\sv(\x_t)\|^2 \ \leq \ \frac{\|\x_t - \Pi_{\XCal^*}(\x_{t^*})\|^2}{\lambda^2} \ \leq \ \frac{D^2}{\lambda^2}. 
\end{equation}
Using the first inequality in Lemma~\ref{Lemma:OGD-adaptive-first}, we have
\begin{eqnarray}\label{inequality-OGD-adaptive-third}
\text{I} & \leq & D^2\sqrt{\beta_{t_1^*+1}} + \sum_{t=0}^T \frac{D^2\|\sv(\x_t)\|^2}{\sqrt{\beta_{t_1^*+1} + \sum_{j=0}^{t-1} \|\sv(\x_j)\|^2}}  \\
& \leq & D^2\sqrt{\beta_{t_1^*+1}} + \sum_{t=0}^{t_1^*} \frac{D^2\|\sv(\x_t)\|^2}{\sqrt{\beta_{t_1^*+1} + \sum_{j=0}^{t-1} \|\sv(\x_j)\|^2}} + \sum_{t=t_1^*+1}^T D^2\eta_{t+1} \|\sv(\x_t)\|^2. \nonumber 
\end{eqnarray}
Since $\eta_{t+1} \leq \lambda/2D^2$ for all $t > t_1^*$, we have
\begin{equation}\label{inequality-OGD-adaptive-fourth}
\sum_{t=t_1^*+1}^T D^2\eta_{t+1} \|\sv(\x_t)\|^2 \ \leq \ \sum_{t=t_1^*+1}^T \frac{\lambda\|\sv(\x_t)\|^2}{2}.
\end{equation}
Using the second inequality in Lemma~\ref{Lemma:OGD-adaptive-first}, 
\begin{equation}\label{inequality-OGD-adaptive-fifth}
\sum_{t=0}^{t_1^*} \frac{D^2\|\sv(\x_t)\|^2}{\sqrt{\beta_{t_1^*+1} + \sum_{j=0}^{t-1} \|\sv(\x_j)\|^2}} \ \leq \ \frac{2D^2}{\lambda^2\sqrt{\beta_{t_1^*+1}}} + \frac{3D}{\lambda} + 3\sqrt{\beta_{t_1^*+1} + \sum_{j=0}^{t_1^*-1} \|\sv(\x_j)\|^2}.
\end{equation}
By the definition of $\eta_t$, we have
\begin{equation}\label{inequality-OGD-adaptive-sixth}
\sqrt{\beta_{t_1^*+1} + \sum_{j=0}^{t_1^*-1} \|\sv(\x_j)\|^2} \ = \ \frac{1}{\eta_{t_1^*+1}} \ < \ \frac{2D^2+1}{\lambda}.  
\end{equation}
Putting Eq.~\eqref{inequality-OGD-adaptive-third}-\eqref{inequality-OGD-adaptive-sixth} together yields that 
\begin{equation*}
\text{I} \ \leq \ D^2\sqrt{\beta_{t_1^*+1}} + \frac{2D^2}{\lambda^2\sqrt{\beta_{t_1^*+1}}} + \frac{3 + 3D + 6D^2}{\lambda} + \sum_{t=t_1^*+1}^T \frac{\lambda\|\sv(\x_t)\|^2}{2}. 
\end{equation*}
\textbf{Bounding term $\text{II}$:} By the definition of $\eta_t$ and noting that $\beta_t \geq \beta_1$ for all $t \geq 1$, we have
\begin{equation*}
\text{II} \ \leq \ \frac{1}{\lambda}\left(\sum_{t=0}^{t_1^*} \frac{\|\sv(\x_t)\|^2}{\beta_1 + \sum_{j=0}^{t-1} \|\sv(\x_j)\|^2}\right)
\end{equation*}
Recalling Eq.~\eqref{inequality-OGD-adaptive-bound}, we can apply Lemma~\ref{Lemma:OGD-adaptive-second} with Eq.~\eqref{inequality-OGD-adaptive-sixth} to obtain that 
\begin{equation*}
\text{II} \ \leq \ \frac{1}{\lambda}\left(2 + \frac{4D^2}{\lambda^2\beta_1} + 2\log\left(1 + \frac{1}{\beta_1}\sum_{j=0}^{t_1^*-1} \|\sv(\x_j)\|^2\right)\right) \ \leq \ \frac{1}{\lambda}\left(2 + \frac{4D^2}{\lambda^2\beta_1} + 2\log\left(1 + \frac{8D^4+2}{\lambda^2\beta_1}\right)\right). 
\end{equation*}
Therefore, we conclude that 
\begin{equation*}
\sum_{t=0}^T \frac{\lambda\|\sv(\x_t)\|^2}{2} \ \leq \ D^2\sqrt{\beta_{t_1^*+1}} + \frac{2D^2}{\lambda^2\sqrt{\beta_{t_1^*+1}}} + \frac{3+3D+6D^2}{\lambda} + \frac{1}{\lambda}\left(2 + \frac{4D^2}{\lambda^2\beta_1} + 2\log\left(1 + \frac{8D^4+2}{\lambda^2\beta_1}\right)\right). 
\end{equation*}
which implies that $\sum_{t=0}^T \|\sv(\x_t)\|^2$ is bounded by a constant for all $T \geq 0$. By starting the sequence at a later index $t_1^*$, we have $\|\sv(\x_{t+1})\| \leq \|\sv(\x_t)\|$ for all $t \geq t_1^*$ and $\sum_{t \geq t_1^*} \|\sv(\x_t)\|^2 < +\infty$. Using the same argument as in Theorem~\ref{Theorem:OGD-nonadaptive}, we conclude that the adaptive OGD iterate $\x_t$ satisfies that $\epsilon(\x_T) = o(1/T)$. 
\end{proof}
\begin{remark}
In our proof, $D > 0$ is the constant that depends on the set of Nash equilibrium, and in stating the bound this way, we followed the standard tradition in optimization where the bound (on either $f(\x_t) - f(\x^*)$ or $\|\nabla f(\x^t)\|^2$) would depend on $\|\x_0 - \x^*\|$ (a constant that cannot be avoided). Here, our bound similarly depends on this constant, except the game setting is more complicated (since there is no common objective) so this constant depends on the first few iterates as well (not just the initial iterate $\x^0$): more precisely, $D = \max_{1 \leq t \leq t^*} \|\x_t - \Pi_{\XCal^*}(\x_{t^*})\|$. 
\end{remark}

\section{Convergence under Imperfect Feedback with Relative Random Noise}\label{sec:imperfect_relative}
In this section, we analyze the convergence property of OGD learning under imperfect feedback with relative random noise~\eqref{model:noisy-relative}. In particular, we show that the almost sure last-iterate convergence is guaranteed and the finite-time average-iterate convergence rate is $O(1/T)$ when $0 < \tau_t \leq \tau < +\infty$. More importantly, we get the finite-time last-iterate convergence rate when $\tau_t$ satisfies certain summable condition~\eqref{condition:noisy-OGD-relative}. 

\subsection{Almost Sure Last-Iterate Convergence}
In this subsection, we establish the almost sure last-iterate convergence under imperfect feedback with relative random noise. The appealing feature here is that the convergence results provably hold with a constant step-size. The first and second lemmas provide two different key inequalities for $\x_t$ and $\EE[\epsilon(\x_t)]$ respectively. 
\begin{lemma}\label{Lemma:key-inequality-noisy-OGD}
Fix a $\lambda$-cocoercive game $\GCal$ with continuous action spaces $(\NCal, \XCal = \prod_{i=1}^N \br^{n_i}, \{u_i\}_{i=1}^N)$ with an nonempty set of Nash equilibrium $\XCal^*$. Under the noisy model~\eqref{model:noisy}, the noisy OGD iterate $\x_t$ satisfies for any Nash equilibrium $\x^* \in \XCal^*$ that
\begin{equation}\label{inequality-noisy-OGD-absolute-key}
\|\x_{t+1} - \x^*\|^2 \ \leq \ \|\x_t - \x^*\|^2 + 2\eta_{t+1}^2\|\xi_{t+1}\|^2 - (2\lambda\eta_{t+1} - 2\eta_{t+1}^2)\|\sv(\x_t)\|^2 + 2\eta_{t+1}(\x_t - \x^*)^\top\xi_{t+1}.
\end{equation}
\end{lemma}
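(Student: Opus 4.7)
The plan is to expand the squared distance $\|\x_{t+1} - \x^*\|^2$ using the noisy OGD update $\x_{t+1} = \x_t + \eta_{t+1}(\sv(\x_t) + \xi_{t+1})$, and then apply cocoercivity together with the standard inequality $\|a+b\|^2 \leq 2\|a\|^2 + 2\|b\|^2$ to the cross and quadratic terms. Concretely, I would first write
\begin{equation*}
\|\x_{t+1}-\x^*\|^2 \;=\; \|\x_t-\x^*\|^2 \;+\; 2\eta_{t+1}(\x_t-\x^*)^\top\sv(\x_t) \;+\; 2\eta_{t+1}(\x_t-\x^*)^\top\xi_{t+1} \;+\; \eta_{t+1}^2\|\sv(\x_t)+\xi_{t+1}\|^2,
\end{equation*}
and then handle the three non-trivial terms in turn.

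For the inner-product term with the gradient, I would invoke the fact that in an \emph{unconstrained} cocoercive game any Nash equilibrium $\x^*$ satisfies $\sv(\x^*)=0$ (apply Proposition~\ref{prop:existence} with $\x=\x^*\pm\sv(\x^*)$). Cocoercivity then gives $(\x_t-\x^*)^\top\sv(\x_t) = (\x_t-\x^*)^\top(\sv(\x_t)-\sv(\x^*)) \leq -\lambda\|\sv(\x_t)\|^2$, so the gradient cross term contributes $-2\lambda\eta_{t+1}\|\sv(\x_t)\|^2$. For the final quadratic, I would apply $\|\sv(\x_t)+\xi_{t+1}\|^2 \leq 2\|\sv(\x_t)\|^2 + 2\|\xi_{t+1}\|^2$, which splits it into a $2\eta_{t+1}^2\|\sv(\x_t)\|^2$ piece that combines with the cocoercive bound to form the coefficient $-(2\lambda\eta_{t+1}-2\eta_{t+1}^2)$, and a $2\eta_{t+1}^2\|\xi_{t+1}\|^2$ piece that appears as the noise term. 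The noise cross term $2\eta_{t+1}(\x_t-\x^*)^\top\xi_{t+1}$ is kept as-is, since it is the martingale remainder that will later be handled by taking conditional expectations in subsequent lemmas.

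Assembling these three contributions produces exactly Eq.~\eqref{inequality-noisy-OGD-absolute-key}. There is no real obstacle here — the lemma is essentially a deterministic descent-style identity — but the one point that deserves care is justifying $\sv(\x^*)=0$ from the variational inequality in the unconstrained setting, since the paper's cocoercive definition is stated for general $\XCal$ and the overall analysis hinges on being able to drop $\sv(\x^*)$ from the cocoercive estimate. Once this is in place, the rest is algebraic expansion and the single use of $(a+b)^2 \leq 2a^2+2b^2$.
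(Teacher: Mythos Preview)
Your proposal is correct and mirrors the paper's proof almost exactly: expand $\|\x_{t+1}-\x^*\|^2$ via the update, apply $\|\sv(\x_t)+\xi_{t+1}\|^2 \leq 2\|\sv(\x_t)\|^2 + 2\|\xi_{t+1}\|^2$ (what the paper calls ``Young's inequality''), and use $\sv(\x^*)=0$ together with $\lambda$-cocoercivity to bound the gradient cross term. The only cosmetic difference is that the paper first writes the per-player expansion and then sums over $i\in\NCal$, whereas you work directly with the stacked vectors.
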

\begin{proof}
Using the update formula of $x_{i, t+1}$ in Eq.~\eqref{Update:OGD-learning}, we have the following for any $x_i^* \in \XCal_i^*$: 
\begin{equation*}
\|x_{i, t+1} - x_i^*\|^2 \ = \ \|x_{i, t} + \eta_{t+1} \hat{v}_{i, t+1} - x_i^*\|^2. 
\end{equation*}
which implies that 
\begin{equation*}
\|x_{i, t+1} - x_i^*\|^2 \ = \ \|x_{i, t} - x_i^*\|^2 + \eta_{t+1}^2\|\hat{v}_{i, t+1}\|^2 + 2\eta_{t+1}(x_{i, t} - x_i^*)^\top\hat{v}_{i, t+1}.   
\end{equation*}
Summing up the above inequality over $i \in \NCal$ and rearranging yields that
\begin{equation}\label{inequality-noisy-OGD-absolute-first}
\|\x_{t+1} - \x^*\|^2 \ = \ \|\x_t - \x^*\|^2 + 2\eta_{t+1}(\x_t - \x^*)^\top\hat{v}_{t+1} + \eta_{t+1}^2\|\hat{v}_{t+1}\|^2. 
\end{equation}
Using Young's inequality, we have
\begin{equation*}
\|\x_{t+1} - \x^*\|^2 \ \leq \ \|\x_t - \x^*\|^2 + 2\eta_{t+1}^2\|\xi_{t+1}\|^2 + 2\eta_{t+1}^2\|\sv(\x_t)\|^2 + 2\eta_{t+1}(\x_t - \x^*)^\top(\sv(\x_t) + \xi_{t+1}). 
\end{equation*}
Since $\x^* \in \XCal^*$ and $\GCal$ is a $\lambda$-cocoercive game, we have $\sv(\x^*) = 0$ and 
\begin{equation*}
(\x_t - \x^*)^\top\sv(\x_t) \ = \ (\x_t - \x^*)^\top(\sv(\x_t) - \sv(\x^*)) \ \leq \ -\lambda\|\sv(\x_t) - \sv(\x^*)\|^2 \ = \ -\lambda\|\sv(\x_t)\|^2. 
\end{equation*}
Putting these pieces yields the desired inequality. 
\end{proof}
\begin{lemma}\label{Lemma:OGD-noisy-relative-iterative}
Fix a $\lambda$-cocoercive game $\GCal$ with continuous action spaces $(\NCal, \XCal = \prod_{i=1}^N \br^{n_i}, \{u_i\}_{i=1}^N)$ with an nonempty set of Nash equilibrium $\XCal^*$. Under the noisy model~\eqref{model:noisy} with relative random noise~\eqref{model:noisy-relative} and the step-size sequence $\eta_t \in (0, \lambda)$. The noisy OGD iterate $\x_t$ satisfies $\EE[\epsilon(\x_{t+1})] \leq \EE[\epsilon(\x_t)] + \tau_t\|\sv(\x_t)\|^2/\lambda\eta_{t+1}$. 
\end{lemma}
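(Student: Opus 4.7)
The plan is to derive a one-step recursion for $\epsilon(\x_t) = \|\sv(\x_t)\|^2$. The natural entry point is the algebraic identity
\begin{equation*}
\|\sv(\x_{t+1})\|^2 = \|\sv(\x_t)\|^2 + 2\sv(\x_t)^\top(\sv(\x_{t+1}) - \sv(\x_t)) + \|\sv(\x_{t+1}) - \sv(\x_t)\|^2,
\end{equation*}
since all the work is then concentrated in controlling the inner-product cross-term.

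First, I would apply the defining $\lambda$-cocoercivity inequality to the pair $(\x_t, \x_{t+1})$ together with the update rule $\x_{t+1} - \x_t = \eta_{t+1}(\sv(\x_t) + \xi_{t+1})$, which after division by $\eta_{t+1}$ yields
\begin{equation*}
\sv(\x_t)^\top(\sv(\x_{t+1}) - \sv(\x_t)) \leq -\frac{\lambda}{\eta_{t+1}}\|\sv(\x_{t+1}) - \sv(\x_t)\|^2 - \xi_{t+1}^\top(\sv(\x_{t+1}) - \sv(\x_t)).
\end{equation*}
Plugging this back into the algebraic expansion leaves
\begin{equation*}
\|\sv(\x_{t+1})\|^2 \leq \|\sv(\x_t)\|^2 - \left(\frac{2\lambda}{\eta_{t+1}} - 1\right)\|\sv(\x_{t+1}) - \sv(\x_t)\|^2 - 2\xi_{t+1}^\top(\sv(\x_{t+1}) - \sv(\x_t)).
\end{equation*}

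Second, I would use Young's inequality on the remaining noise-cross-term with a parameter chosen so that its quadratic contribution is exactly absorbed by the negative $\|\sv(\x_{t+1}) - \sv(\x_t)\|^2$ coefficient (which is positive under $\eta_{t+1} \in (0,\lambda)$). This leaves a bound of the form
\begin{equation*}
\|\sv(\x_{t+1})\|^2 \leq \|\sv(\x_t)\|^2 + C(\lambda, \eta_{t+1})\|\xi_{t+1}\|^2,
\end{equation*}
with $C(\lambda, \eta_{t+1})$ determined by the Young's balance. Finally, I would take conditional expectation given $\FCal_t$, invoke the relative-noise hypothesis $\EE[\|\xi_{t+1}\|^2 \mid \FCal_t] \leq \tau_t\|\sv(\x_t)\|^2$, and then take full expectation to obtain the claimed recursion.

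The main obstacle is the algebraic calibration of the Young's constant so that $C(\lambda, \eta_{t+1})$ matches the exact form stated in the lemma. This requires the step-size restriction $\eta_{t+1} \in (0,\lambda)$ precisely, since the quadratic coefficient $(2\lambda/\eta_{t+1}-1)$ must be strictly positive and large enough to swallow the Young's residual. A subtlety worth flagging is that $\sv(\x_{t+1})$ depends on $\xi_{t+1}$ through $\x_{t+1}$, so the noise-cross-term is \emph{not} zero-mean conditionally on $\FCal_t$; this is exactly why Young's inequality is the correct tool rather than a direct martingale-type cancellation.
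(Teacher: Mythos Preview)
Your proposal is correct and follows essentially the same route as the paper: the paper likewise expands $\epsilon(\x_{t+1})-\epsilon(\x_t)$, substitutes the update rule, applies $\lambda$-cocoercivity to bound $(\sv(\x_{t+1})-\sv(\x_t))^\top(\x_{t+1}-\x_t)$, handles the noise cross-term by Young's inequality, drops the resulting nonpositive $\|\sv(\x_{t+1})-\sv(\x_t)\|^2$ term (using $\eta_{t+1}<\lambda$), and then takes expectations with the relative-noise bound. The only calibration detail you left open is that the paper takes the Young parameter equal to $\lambda/\eta_{t+1}$ (equivalently, applies Young with weight $\lambda$ \emph{before} the $2/\eta_{t+1}$ factor), which leaves the residual coefficient $(1-\lambda/\eta_{t+1})<0$ rather than exactly zero and produces the stated $1/(\lambda\eta_{t+1})$ factor on $\|\xi_{t+1}\|^2$.
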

\begin{proof}
Using the same argument as in Lemma~\ref{Lemma:key-inequality-noisy-OGD}, we have 
\begin{equation*}
\EE[\epsilon(\x_{t+1}) \mid \FCal_t] - \epsilon(\x_t) \ \leq \ \frac{\EE[\|\xi_{t+1}\|^2 \mid \FCal_t]}{\lambda\eta_{t+1}} + \left(1 - \frac{\lambda}{\eta_{t+1}}\right)\EE\left[\|\sv(\x_{t+1}) - \sv(\x_t)\|^2 \mid \FCal_t\right]. 
\end{equation*}
Since the noisy model~\eqref{model:noisy} is with relative random noise~\eqref{model:noisy-absolute}, we have $\EE[\|\xi_{t+1}\|^2 \mid \FCal_t] \leq \tau_t\|\sv(\x_t)\|^2$. Also, $\eta_t \in (0, \lambda)$ for all $t \geq 1$. Therefore, we conclude that 
\begin{equation*}
\EE[\epsilon(\x_{t+1}) \mid \FCal_t] - \epsilon(\x_t) \ \leq \ \frac{\tau_t\|\sv(\x_t)\|^2}{\lambda\eta_{t+1}}.  
\end{equation*}
Taking an expectation of both sides yields the desired inequality.
\end{proof}
Now we are ready to characterize the almost sure last-iterate convergence. Note that the condition imposed on $\tau_t$ is minimal and $\eta_t = \eta \in [\underline{\eta}, \overline{\eta}]$ is allowed for all $t \geq 1$. 
\begin{theorem}\label{Thm:noisy-OGD-relative-last}
Fix a $\lambda$-cocoercive game $\GCal$ with continuous action spaces $(\NCal, \XCal = \prod_{i=1}^N \br^{n_i}, \{u_i\}_{i=1}^N)$ with an nonempty set of Nash equilibrium $\XCal^*$. Under the noisy model~\eqref{model:noisy} with relative random noise~\eqref{model:noisy-relative} satisfying $\tau_t \in (0, \tau]$ for some $\tau < +\infty$ and the step-size sequence satisfying $0 < \underline{\eta} \leq \eta_t \leq \overline{\eta} < \lambda/(1+\tau)$ for all $t \geq 1$. The noisy OGD iterate $\x_t$ converges to $\XCal^*$ almost surely. 
\end{theorem}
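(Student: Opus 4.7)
The plan is to combine the one-step recursion from Lemma~\ref{Lemma:key-inequality-noisy-OGD} with the Robbins--Siegmund almost-sure convergence theorem for nonnegative supermartingales. Fix any $\x^* \in \XCal^*$ and take conditional expectation with respect to $\FCal_t$ in Eq.~\eqref{inequality-noisy-OGD-absolute-key}. The cross term vanishes because $\EE[\xi_{t+1} \mid \FCal_t] = 0$, and the noise term is absorbed using $\EE[\|\xi_{t+1}\|^2 \mid \FCal_t] \leq \tau_t \|\sv(\x_t)\|^2 \leq \tau \|\sv(\x_t)\|^2$. What remains is, up to regrouping,
\begin{equation*}
\EE[\|\x_{t+1} - \x^*\|^2 \mid \FCal_t] \ \leq \ \|\x_t - \x^*\|^2 \ - \ 2\eta_{t+1}\bigl(\lambda - \eta_{t+1}(1+\tau_t)\bigr)\|\sv(\x_t)\|^2.
\end{equation*}
The hypothesis $\overline{\eta} < \lambda/(1+\tau)$ makes the bracket bounded below by the positive constant $c := \lambda - \overline{\eta}(1+\tau) > 0$, and combined with $\eta_{t+1} \geq \underline{\eta}$ the per-step descent is at least $2\underline{\eta}\,c\,\|\sv(\x_t)\|^2$.

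Next I would invoke Robbins--Siegmund on the nonnegative sequence $\|\x_t - \x^*\|^2$: the recursion above (with zero ``noise'' term and nonnegative ``descent'' term) yields immediately that $\|\x_t - \x^*\|^2$ converges almost surely to some finite random variable $A_\infty(\x^*)$ and that $\sum_{t=0}^{\infty} \|\sv(\x_t)\|^2 < +\infty$ almost surely. Since the summands are nonnegative, summability forces $\|\sv(\x_t)\|^2 \to 0$ almost surely.

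Having these two facts, the final step is a standard bounded-sequence-plus-subsequential-limit argument. Convergence of $\|\x_t - \x^*\|^2$ implies that $\{\x_t\}$ is almost surely bounded, so along almost every sample path there exists a subsequence $\x_{t_k} \to \tilde{\x}$. By continuity of $\sv$ and $\|\sv(\x_{t_k})\| \to 0$, the limit satisfies $\sv(\tilde{\x}) = 0$, which in a $\lambda$-cocoercive game is equivalent to $\tilde{\x} \in \XCal^*$. Now re-apply the almost-sure convergence of $\|\x_t - \x^*\|^2$ with $\x^* = \tilde{\x}$: the sequence $\|\x_t - \tilde{\x}\|^2$ converges, and has a subsequence going to $0$, so the whole sequence converges to $\tilde{\x} \in \XCal^*$, which is exactly the claim.

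The main obstacle to watch out for is not the Robbins--Siegmund step itself but the verification that the step-size budget $\overline{\eta}(1+\tau) < \lambda$ is \emph{exactly} what is needed to turn the raw recursion into a supermartingale inequality with strictly positive descent, so that the argument works with a constant step-size rather than a decreasing one; this is where the relative-noise model is essential, since the variance contribution is rescaled by $\|\sv(\x_t)\|^2$ and can be absorbed into the descent term, whereas under absolute random noise an extra $\eta_{t+1}^2 \sigma_t^2$ term would remain and force $\eta_t \to 0$. A minor care point is that $\EE[\|\sv(\x_t)\|^2]$ itself need not be monotone, but this is not needed for the argument; we only use pathwise nonnegativity and summability of $\|\sv(\x_t)\|^2$, together with pathwise boundedness of $\{\x_t\}$.
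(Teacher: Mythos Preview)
Your proof is correct and follows a somewhat different route from the paper's. Both arguments start identically, taking conditional expectations in the inequality of Lemma~\ref{Lemma:key-inequality-noisy-OGD} and using the relative-noise bound $\EE[\|\xi_{t+1}\|^2 \mid \FCal_t] \leq \tau\|\sv(\x_t)\|^2$ to obtain the key supermartingale recursion with strictly positive descent coefficient $\lambda - (1+\tau)\overline{\eta} > 0$. From there the two proofs diverge. The paper invokes Doob's convergence theorem to get $\|\x_t - \x^*\|^2 \to M_\infty$ a.s.\ and then argues by contradiction: if $M_\infty > 0$ with positive probability, the iterates stay bounded away from $\XCal^*$ on a set of positive measure, so $\|\sv(\x_t)\|$ is bounded below there, contradicting $\sum_t \EE[\|\sv(\x_t)\|^2] < \infty$ (obtained by taking full expectations and telescoping). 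You instead apply Robbins--Siegmund, which simultaneously yields the a.s.\ convergence of $\|\x_t - \x^*\|^2$ and the \emph{pathwise} summability $\sum_t \|\sv(\x_t)\|^2 < \infty$ a.s., and then conclude by an Opial-type argument (boundedness, cluster-point identification via continuity of $\sv$, uniqueness of the limit). Your route is arguably cleaner and in fact delivers the slightly stronger conclusion that $\x_t$ converges to a single random point of $\XCal^*$, not merely to the set. One small caveat: in the last step you re-apply the convergence of $\|\x_t - \x^*\|^2$ with the random choice $\x^* = \tilde{\x}$; to make this fully rigorous one first fixes a countable dense subset of $\XCal^*$ and intersects the corresponding full-measure events, a standard maneuver.
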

\begin{proof}
We obtain the following inenquality by taking the expectation of both sides of Eq.~\eqref{inequality-noisy-OGD-absolute-key} (cf. Lemma~\ref{Lemma:key-inequality-noisy-OGD}) conditioned on $\FCal_t$: 
\begin{equation*}
\EE[\|\x_{t+1} - \x^*\|^2 \mid \FCal_t] \leq \|\x_t - \x^*\|^2 - (2\lambda\eta_{t+1} - 2\eta_{t+1}^2)\|\sv(\x_t)\|^2 + 2\eta_{t+1}^2\EE[\|\xi_{t+1}\|^2 \mid \FCal_t] + 2\eta_{t+1}\EE[(\x_t - \x^*)^\top\xi_{t+1} \mid \FCal_t]. 
\end{equation*}
Since the noisy model~\eqref{model:noisy} is with relative random noise~\eqref{model:noisy-relative} satisfying $\tau_t \in (0, \tau)$ for some $\tau < +\infty$, we have $\EE[(\x_t - \x^*)^\top\xi_{t+1} \mid \FCal_t] = 0$ and $\EE[\|\xi_{t+1}\|^2 \mid \FCal_t] \leq \tau\|\sv(\x_t)\|^2$. Therefore, we have
\begin{equation}\label{inequality-noisy-OGD-relative-first}
\EE[\|\x_{t+1} - \x^*\|^2 \mid \FCal_t] \ \leq \ \|\x_t - \x^*\|^2 - 2(\lambda - \overline{\eta} - \tau\overline{\eta})\eta_{t+1}\|\sv(\x_t)\|^2.
\end{equation}
Since $\eta_t > 0$ and $\overline{\eta} < \lambda/(1+\tau)$, we let $M_t = \|\x_t - \x^*\|^2$ and obtain that $M_t$ is an nonnegative supermartingale. Then Doob's martingale convergence theorem shows that $M_n$ converges to an nonnegative and integrable random variable almost surely. Let $M_\infty = \lim_{t \rightarrow +\infty} M_t$, it suffices to show that $M_\infty = 0$ almost surely now. We assume to contrary that, there exists $m > 0$ such that $M_\infty > m$ with positive probability. Then $M_t > m/2$ for sufficiently large $t$ with positive probability. Formally, there exists $\delta > 0$ such that
\begin{equation*}
\Prob(M_t > m/2 \ \textnormal{for sufficiently large } t) \ \geq \ \delta.  
\end{equation*} 
By the definition of $M_t$ and recalling that $\x^* \in \XCal^*$ can be any Nash equilibrium, we let $U$ be a $(m/2)$-neighborhood of $\XCal^*$ and obtain that $\x_t \notin U$ for sufficiently large $t$ with positive probability. Since $\|\sv(\x)\| = 0$ if and only if $\x \in \XCal^*$, there exists $c > 0$ such that $\|\sv(\x)\| \geq c$ for sufficiently large $t$ with positive probability. Therefore, we conclude that $\EE[\|\sv(\x_t)\|^2] \nrightarrow 0$ as $t \rightarrow +\infty$. 

On the other hand, by taking the expectation of Eq.~\eqref{inequality-noisy-OGD-relative-first} and using the condition $\eta_t \geq \underline{\eta} > 0$ for all $t \geq 1$, we have
\begin{equation*}
\EE[\|\sv(\x_t)\|^2] \ \leq \ \frac{\EE[\|\x_t - \x^*\|^2] - \EE[\|\x_{t+1} - \x^*\|^2]}{2(\lambda - \overline{\eta} - \tau\overline{\eta})\underline{\eta}}. 
\end{equation*}
This implies that $\sum_{t=0}^\infty \EE[\|\sv(\x_t)\|^2] < +\infty$ and hence $\EE[\|\sv(\x_t)\|^2] \rightarrow 0$ as $t \rightarrow +\infty$ which contradicts the previous argument. This completes the proof. 
\end{proof}

\subsection{Finite-Time Convergence Rate: Time-Average and Last-Iterate}
In this subsection, we focus on deriving two types of rates: the time-average and last-iterate convergence rates, as formalized by the following theorems.
\begin{theorem}\label{Thm:noisy-OGD-relative-average-rate}
Fix a $\lambda$-cocoercive game $\GCal$ with continuous action spaces $(\NCal, \XCal = \prod_{i=1}^N \br^{n_i}, \{u_i\}_{i=1}^N)$ with an nonempty set of Nash equilibrium $\XCal^*$. Under the noisy model~\eqref{model:noisy} with relative random noise~\eqref{model:noisy-relative} satisfying $\tau_t \in (0, \tau]$ for some $\tau < +\infty$ and a step-size sequence satisfying $0 < \underline{\eta} \leq \eta_t \leq \overline{\eta} < \lambda/(1+\tau)$ for all $t \geq 1$, the noisy iterate $\x_t$ satisfies $\frac{1}{T+1}(\EE[\sum_{t=0}^T \epsilon(\x_t)]) = O(1/T)$. 
\end{theorem}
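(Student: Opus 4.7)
The plan is to reuse almost everything from the proof of Theorem~\ref{Thm:noisy-OGD-relative-last}: the hard analytic work (deriving a one-step descent inequality on $\|\x_t-\x^*\|^2$ from cocoercivity combined with the relative-noise bound) is already packaged in Lemma~\ref{Lemma:key-inequality-noisy-OGD}, so the present theorem reduces to a telescoping argument.

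More concretely, first I would fix an arbitrary Nash equilibrium $\x^* \in \XCal^*$ and start from Eq.~\eqref{inequality-noisy-OGD-absolute-key}. Taking conditional expectation on $\FCal_t$, using $\EE[\xi_{t+1} \mid \FCal_t]=0$ to kill the inner-product term and the relative-noise bound $\EE[\|\xi_{t+1}\|^2 \mid \FCal_t]\le \tau\|\sv(\x_t)\|^2$ together with $\eta_{t+1}\le \overline{\eta}$, I obtain the descent inequality
\begin{equation*}
\EE[\|\x_{t+1}-\x^*\|^2\mid \FCal_t] \ \le \ \|\x_t-\x^*\|^2 - 2(\lambda-\overline{\eta}-\tau\overline{\eta})\,\eta_{t+1}\,\|\sv(\x_t)\|^2,
\end{equation*}
which is exactly Eq.~\eqref{inequality-noisy-OGD-relative-first}. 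Since $\overline{\eta}<\lambda/(1+\tau)$, the constant $\kappa := 2(\lambda-\overline{\eta}-\tau\overline{\eta})\,\underline{\eta}>0$, and using $\eta_{t+1}\ge \underline{\eta}$ I rearrange and take unconditional expectations to get $\kappa\,\EE[\|\sv(\x_t)\|^2]\le \EE[\|\x_t-\x^*\|^2]-\EE[\|\x_{t+1}-\x^*\|^2]$.

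Next I would sum this telescoping inequality over $t=0,1,\dots,T$:
\begin{equation*}
\kappa\sum_{t=0}^{T}\EE[\|\sv(\x_t)\|^2] \ \le \ \EE[\|\x_0-\x^*\|^2] - \EE[\|\x_{T+1}-\x^*\|^2] \ \le \ \|\x_0-\x^*\|^2.
\end{equation*}
Dividing by $(T+1)$ and recalling $\epsilon(\x)=\|\sv(\x)\|^2$ yields
\begin{equation*}
\frac{1}{T+1}\,\EE\!\left[\sum_{t=0}^{T}\epsilon(\x_t)\right] \ \le \ \frac{\|\x_0-\x^*\|^2}{\kappa\,(T+1)} \ = \ O(1/T),
\end{equation*}
which is the stated time-average rate.

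There is no real obstacle here beyond bookkeeping: the only thing one must be careful about is getting a strictly positive coefficient in front of $\|\sv(\x_t)\|^2$ after combining cocoercivity with the relative-noise inflation, which is precisely why the step-size upper bound is sharpened from $\lambda$ (as in the noiseless Lemma~\ref{Lemma:OGD-nonadaptive}) to $\lambda/(1+\tau)$. Once this is in place, the telescoping argument is completely routine, and one can, if desired, also take $\x^*$ to be the projection $\Pi_{\XCal^*}(\x_0)$ so that the numerator $\|\x_0-\x^*\|^2$ coincides with $\mathrm{dist}(\x_0,\XCal^*)^2$, making the constant as tight as our current argument allows.
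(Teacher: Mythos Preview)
Your proposal is correct and follows essentially the same approach as the paper: derive the one-step descent inequality Eq.~\eqref{inequality-noisy-OGD-relative-first} from Lemma~\ref{Lemma:key-inequality-noisy-OGD} together with the relative-noise bound, then telescope over $t=0,\dots,T$. The only cosmetic difference is that you lower-bound $\eta_{t+1}$ by $\underline{\eta}$ \emph{before} summing (yielding the constant $\kappa$ directly), whereas the paper divides by $\eta_{t+1}$ first and then performs an Abel-type summation using $\EE[\|\x_t-\x^*\|^2]\le\|\x_0-\x^*\|^2$; your route is slightly cleaner but otherwise identical in substance.
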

\begin{proof}
Using the same argument as in Theorem~\ref{Thm:noisy-OGD-relative-last}, we obtain that
\begin{equation}\label{inequality-noisy-OGD-relative-first}
\EE[\|\x_{t+1} - \x^*\|^2 \mid \FCal_t] \ \leq \ \|\x_t - \x^*\|^2 - 2(\lambda - \overline{\eta} - \tau\overline{\eta})\eta_{t+1}\|\sv(\x_t)\|^2.
\end{equation}
Taking an expectation of both sides of Eq.~\eqref{inequality-noisy-OGD-relative-first} and rearranging yields that 
\begin{equation}\label{inequality-noisy-OGD-relative-second}
\EE[\epsilon(\x_t)] \ \leq \ \frac{1}{2(\lambda - \overline{\eta} - \tau\overline{\eta})\eta_{t+1}}\left(\EE[\|\x_t - \x^*\|^2] - \EE[\|\x_{t+1} - \x^*\|^2]\right). 
\end{equation}
Summing up the above inequality over $t=0, 1, \ldots, T$ yields that 
\begin{equation*}
\EE\left[\sum_{t=0}^T\epsilon(\x_t)\right] \ \leq \ \sum_{t=1}^T \left(\frac{1}{\eta_{t+1}} - \frac{1}{\eta_t}\right)\frac{\EE[\|\x_t - \x^*\|^2]}{2(\lambda - \overline{\eta} - \tau\overline{\eta})} + \frac{\|\x_0 - \x^*\|^2}{2(\lambda - \overline{\eta} - \tau\overline{\eta})\eta_1}. 
\end{equation*}
On the other hand, we have $\EE[\|\x_{t+1} - \x^*\|^2] \ \leq \ \EE[\|\x_t - \x^*\|^2]$. This implies that $\EE[\|\x_t - \x^*\|^2] \leq \|\x_0 - \x^*\|^2$ for all $t \geq 1$. Therefore, we conclude that 
\begin{eqnarray*}
\EE\left[\sum_{t=0}^T\epsilon(\x_t)\right] & \leq & \frac{\|\x_0 - \x^*\|^2}{2(\lambda - \overline{\eta} - \tau\overline{\eta})\eta_1} + \frac{\|\x_0 - \x^*\|^2}{2(\lambda - \overline{\eta} - \tau\overline{\eta})}\sum_{t=1}^T \left(\frac{1}{\eta_{t+1}} - \frac{1}{\eta_t}\right) \\
& = & \frac{\|\x_0 - \x^*\|^2}{2(\lambda - \overline{\eta} - \tau\overline{\eta})\eta_1} + \frac{\|\x_0 - \x^*\|^2}{2(\lambda - \overline{\eta} - \tau\overline{\eta})\eta_{T+1}} \\ 
& \leq & \frac{\|\x_0 - \x^*\|^2}{(\lambda - \overline{\eta} - \tau\overline{\eta})\underline{\eta}} \ = \ O(1). 
\end{eqnarray*}
This completes the proof. 
\end{proof}
Inspired by Lemma~\ref{Lemma:OGD-noisy-relative-iterative}, we impose an intuitive condition on the variance ratio of noisy process $\{\tau_t\}_{t \geq 0}$. More specifically, $\{\tau_t\}_{t \geq 0}$ is an nonincreasing sequence and there exists a function $a: \br^+ \rightarrow \br^+$ satisfying $a(t) = o(1)$ such that 
\begin{equation}\label{condition:noisy-OGD-relative}
\frac{1}{T+1}\left(\sum_{t=0}^{T-1} \tau_t\right) \ = \ O(a(T)). 
\end{equation}
\begin{remark}
The condition~\eqref{condition:noisy-OGD-relative} is fairly mild. Indeed, the decaying rate $a(t)$ can be very slow which still guarantees the finite-time last-iterate convergence rate. For some typical examples, we have $a(t) = \log\log(t)/t$ if $\tau_t = 1/t\log(t)$ and $a(t) = \log(t)/t$ if $\tau_t = 1/t$. When $\tau_t = \Omega(1/t)$, we have $a(t) = \tau_t$, such as $a(t) = 1/\sqrt{t}$ if $\tau_t = 1/\sqrt{t}$ and $a(t) = 1/\log\log(t)$ if $\tau_t = 1/\log\log(t)$. Under this condition, we can derive the last-iterate convergence rate given the decaying rate of $\tau_t$ as $t \rightarrow +\infty$.  
\end{remark}
Under the condition~\eqref{condition:noisy-OGD-relative}, the finite-time last-iterate convergence rate can be derived under certain step-size sequences. 
\begin{theorem}\label{Thm:noisy-OGD-relative-last-rate}
Fix a $\lambda$-cocoercive game $\GCal$ with continuous action spaces $(\NCal, \XCal = \prod_{i=1}^N \br^{n_i}, \{u_i\}_{i=1}^N)$ with an nonempty set of Nash equilibrium $\XCal^*$. Under the noisy model~\eqref{model:noisy} with relative random noise~\eqref{model:noisy-relative} satisfying Eq.~\eqref{condition:noisy-OGD-relative} and the step-size sequence satisfying $0 < \underline{\eta} \leq \eta_t \leq \overline{\eta} < \lambda/(1+\tau)$ for all $t \geq 1$, the noisy OGD iterate $\x_t$ satisfies
\begin{equation*}
\EE[\epsilon(\x_T)] \ = \ \left\{ 
\begin{array}{rl}
O(a(T)) & \textnormal{if} \ a(T) = \Omega(1/T), \\
O(1/T) & \textnormal{otherwise}.
\end{array}
\right. 
\end{equation*}
\end{theorem}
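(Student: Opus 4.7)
The plan is to combine two ingredients we already have: the cumulative-in-time bound from Theorem~\ref{Thm:noisy-OGD-relative-average-rate}, which tells us that $\sum_{t=0}^{T} \EE[\epsilon(\x_t)] = O(1)$, and a quasi-monotonic one-step recursion for $b_t := \EE[\epsilon(\x_t)]$ coming from Lemma~\ref{Lemma:OGD-noisy-relative-iterative}. Taking expectations in the conditional inequality inside that lemma turns $\|\sv(\x_t)\|^2$ on the right-hand side into $\EE[\epsilon(\x_t)]=b_t$, so with $c := 1/(\lambda\underline{\eta})$ one gets the clean recursion $b_{t+1} - b_t \leq c\tau_t b_t$. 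Telescoping this from any index $s \leq T-1$ up to $T$ and using that $\{\tau_t\}$ is nonincreasing yields
\begin{equation*}
b_T \;\leq\; b_s + c\sum_{t=s}^{T-1}\tau_t b_t \;\leq\; b_s + c\tau_s\sum_{t=s}^{T-1} b_t \;\leq\; b_s + O(\tau_s),
\end{equation*}
where the last step inserts the time-average bound $\sum_{t=0}^{T-1} b_t = O(1)$.

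The remaining job is to pick $s$ cleverly. By pigeonhole, some index $s^{\star}\in[\lceil T/2\rceil, T-1]$ satisfies
\begin{equation*}
b_{s^{\star}} \;\leq\; \frac{2}{T}\sum_{s=\lceil T/2\rceil}^{T-1}b_s \;\leq\; \frac{2}{T}\sum_{t=0}^{T-1} b_t \;=\; O(1/T).
\end{equation*}
For the same index, monotonicity of $\{\tau_t\}$ plus the summability hypothesis~\eqref{condition:noisy-OGD-relative} makes $\tau_{s^{\star}}$ automatically small: since $\tau_k \geq \tau_{\lceil T/2\rceil}$ for all $k \leq \lceil T/2\rceil$, one has
\begin{equation*}
\tau_{s^{\star}} \;\leq\; \tau_{\lceil T/2\rceil} \;\leq\; \frac{2}{T}\sum_{k=0}^{\lceil T/2\rceil-1}\tau_k \;\leq\; \frac{2}{T}\sum_{k=0}^{T-1}\tau_k \;=\; O(a(T)).
\end{equation*}
Substituting $s=s^{\star}$ into the telescoped bound gives $b_T = O(1/T) + O(a(T)) = O(\max\{1/T,a(T)\})$, which is exactly the dichotomous rate in the theorem statement: if $a(T) = \Omega(1/T)$ the $a(T)$ term dominates, and otherwise the $1/T$ term does.

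The main obstacle I expect is purely bookkeeping rather than conceptual: one must be careful when reading Lemma~\ref{Lemma:OGD-noisy-relative-iterative} to extract the multiplicative-in-$b_t$ form $b_{t+1} \leq (1+c\tau_t)b_t$ with a step-size-independent constant $c$ (which is where the lower bound $\eta_{t+1} \geq \underline{\eta}$ is used), and one must check that the ``average-then-telescope'' argument is insensitive to the choice of Nash equilibrium $\x^{\star}$ appearing in the time-average bound of Theorem~\ref{Thm:noisy-OGD-relative-average-rate}. Once the recursion is in place, the rest is a short pigeonhole/averaging trick on $s^{\star}$, and the two-regime statement is simply reading off the dominant term in $\max\{1/T,a(T)\}$.
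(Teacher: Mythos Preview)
Your proof is correct and uses the same two ingredients as the paper: the telescoped recursion from Lemma~\ref{Lemma:OGD-noisy-relative-iterative} (in the form $b_{t+1}\le b_t + c\tau_t b_t$ with $c=1/(\lambda\underline{\eta})$) and the summability $\sum_{t}\EE[\epsilon(\x_t)]=O(1)$ from Theorem~\ref{Thm:noisy-OGD-relative-average-rate}. The organization, however, differs. The paper telescopes from every index $t\le T$ to $T$, sums over all such $t$, and then bounds the resulting double sum $\sum_{t=0}^{T-1}\sum_{j=t}^{T-1}\tau_j\EE[\epsilon(\x_j)]$ by $\bigl(\sum_t\tau_t\bigr)\bigl(\sum_t\EE[\epsilon(\x_t)]\bigr)$ using the monotonicity of $\{\tau_t\}$; dividing by $T+1$ gives $\EE[\epsilon(\x_T)]\le \tfrac{1}{T+1}\sum_t\EE[\epsilon(\x_t)] + O(1)\cdot\tfrac{1}{T+1}\sum_t\tau_t$ directly. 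You instead telescope from a \emph{single} index $s^\star\in[\lceil T/2\rceil,T-1]$ chosen by pigeonhole so that $b_{s^\star}=O(1/T)$, and separately bound $\tau_{s^\star}\le\tau_{\lceil T/2\rceil}=O(a(T))$ via the same monotonicity. Both routes yield $\EE[\epsilon(\x_T)]=O(1/T)+O(a(T))$; the paper's averaging-over-starting-points is slightly slicker in that it avoids the explicit index selection, while your pigeonhole version is arguably more transparent about where each term in the final bound originates.
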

\begin{proof}
Using Lemma~\ref{Lemma:OGD-noisy-relative-iterative} and $\eta_t \geq \underline{\eta} > 0$, we have
\begin{equation*}
\EE[\epsilon(\x_T)] \ \leq \ \EE[\epsilon(\x_t)] + \frac{\sum_{j=t}^{T-1} \tau_j\|\sv(\x_j)\|^2}{\lambda\underline{\eta}}. 
\end{equation*}
Summing up the above inequality over $t = 0, \ldots, T$ yields
\begin{equation*}
(T+1)\EE[\epsilon(\x_T)] \ \leq \ \sum_{t=0}^T \EE[\epsilon(\x_t)] + \frac{\sum_{t=0}^{T-1} \sum_{j=t}^{T-1} \tau_j\|\sv(\x_j)\|^2}{\lambda\underline{\eta}}. 
\end{equation*}
Using Eq.~\eqref{inequality-noisy-OGD-relative-first} and $\eta_t \geq \underline{\eta} > 0$ for all $t \geq 1$, we have
\begin{equation*}
\EE[\|\sv(\x_j)\|^2] \ \leq \ \frac{\EE[\|\x_j - \x^*\|^2] - \EE[\|\x_{j+1} - \x^*\|^2]}{2(\lambda - \overline{\eta} - \tau\overline{\eta})\underline{\eta}}. 
\end{equation*}
Putting these pieces with the fact that $\{\tau_t\}_{t \geq 0}$ is an nonincreasing sequence yields that 
\begin{equation*}
\sum_{t=0}^{T-1} \sum_{j=t}^{T-1} \tau_j\|\sv(\x_j)\|^2 \ \leq \ \left(\sum_{t=0}^{T-1} \tau_t\right)\left(\sum_{t=0}^{T-1} \|\sv(\x_t)\|^2\right) \ \leq \ \frac{\|\x_0 - \x^*\|^2}{2(\lambda - \overline{\eta} - \tau\overline{\eta})\underline{\eta}}\left(\sum_{t=0}^{T-1} \tau_t\right). 
\end{equation*}
Together with the fact that $\EE[\sum_{t=0}^T\epsilon(\x_t)] = O(1)$ (cf. Theorem~\ref{Thm:noisy-OGD-relative-average-rate}), we have
\begin{equation*}
\EE[\epsilon(\x_T)] \ \leq \ \frac{\sum_{t=0}^T \EE[\epsilon(\x_t)]}{T+1} + \frac{\|\x_0 - \x^*\|^2}{T+1}\left(\sum_{t=0}^{T-1} \tau_t\right). 
\end{equation*}
This completes the proof. 
\end{proof}
\begin{algorithm}[!t]
\caption{Adaptive Online Gradient Descent with Noisy Feedback Information}\label{algorithm:AdaOGD_noisy}
\begin{algorithmic}[1]
\STATE \textbf{Initialization:} $\x_0 \in \br^n$, $\eta_1 = 1/\beta$ for some $\beta > 0$ and $r \in (1, +\infty)$.  
\FOR{$t = 0, 1, 2, \ldots$}
\FOR{$i = 1, 2, \ldots, N$}
\STATE $x_i^{t+1} = x_i + \eta_{t+1}\sv(\x_t)$.
\ENDFOR 
\STATE $\Delta\x_{t+1} = \sum_{j=0}^t \eta_{j+1}^{-2}\|\x_j - \x_{j+1}\|^2$
\STATE $\eta_{t+2} = 1/\sqrt{\beta + \log(t+2) + \Delta\x_{t+1}}$. 
\ENDFOR
\end{algorithmic}
\end{algorithm}
\subsection{Adaptive OGD Learning}
We study the convergence property of Algorithm~\ref{algorithm:AdaOGD_noisy} under the noisy model~\eqref{model:noisy} with relative random noise~\eqref{model:noisy-relative} satisfying that there exists $a(t) = o(1)$ such that
\begin{equation}\label{condition:noisy-OGD-relative-adaptive}
\frac{\log(T+1)}{T+1}\left(\sum_{t=0}^{T-1} \tau_t\right) \ = \ O(a(T)). 
\end{equation}
Note that Eq.~\eqref{condition:noisy-OGD-relative-adaptive} is slightly stronger than Eq.~\eqref{condition:noisy-OGD-relative}. 

We present our main result in Theorem~\ref{Thm:OGD-adaptive-relative} and remark that the proof technique is new and can be interpreted as a novel combination of that in Theorem~\ref{Thm:OGD-adaptive} and~\ref{Thm:noisy-OGD-relative-last-rate}.  
\begin{theorem}\label{Thm:OGD-adaptive-relative}
Fix a $\lambda$-cocoercive game $\GCal$ with continuous action spaces $(\NCal, \XCal = \prod_{i=1}^N \br^{n_i}, \{u_i\}_{i=1}^N)$ with an nonempty set of Nash equilibrium $\XCal^*$. Under the noisy model~\eqref{model:noisy} with relative random noise~\eqref{model:noisy-relative} satisfying Eq.~\eqref{condition:noisy-OGD-relative-adaptive}, the adaptive noisy OGD iterate $\x_t$ satisfies 
\begin{equation*}
\EE[\epsilon(\x_T)] \ = \ \left\{ 
\begin{array}{cl}
O(a(T)) & \textnormal{if} \ a(T) = \Omega(\log(T)/T), \\
O(\log(T)/T) & \textnormal{otherwise}. 
\end{array}
\right. 
\end{equation*}
\end{theorem}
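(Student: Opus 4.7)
The plan is to combine the double-stopping-time analysis from the proof of Theorem~\ref{Thm:OGD-adaptive} with the iterative-gap bookkeeping used in the proof of Theorem~\ref{Thm:noisy-OGD-relative-last-rate}. The decisive structural fact about Algorithm~\ref{algorithm:AdaOGD_noisy} is the explicit $\log(t+2)$ term inside $\eta_{t+2}^{-2}$: it forces $\eta_{t+2}\leq 1/\sqrt{\beta+\log(t+2)}$ \emph{deterministically}, independent of the realizations of the noisy gradients. Consequently, writing $\tau=\sup_{t}\tau_{t}$ (finite under Eq.~\eqref{condition:noisy-OGD-relative-adaptive} with a nonincreasing $\{\tau_{t}\}$), the two stopping times
\[
t^{*}=\max\bigl\{t\geq 0:\eta_{t+1}(1+\tau)>\lambda\bigr\},\qquad t_{1}^{*}=\max\bigl\{t\geq 0:\eta_{t+1}(1+\tau)(2D^{2}+1)>\lambda\bigr\}
\]
are both deterministically finite, and the ``Case~I'' branch from the proof of Theorem~\ref{Thm:OGD-adaptive} is automatically vacuous.

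First I would establish a uniform second-moment bound on the iterates. Expanding $\|\x_{t+1}-\x^{*}\|^{2}$, using cocoercivity, and the relative-noise bound $\EE[\|\hat v_{t+1}\|^{2}\mid\FCal_{t}]\leq(1+\tau_{t})\|\sv(\x_{t})\|^{2}$ gives
\[
\EE[\|\x_{t+1}-\x^{*}\|^{2}\mid\FCal_{t}]\leq\|\x_{t}-\x^{*}\|^{2}-\bigl(2\lambda-(1+\tau_{t})\eta_{t+1}\bigr)\eta_{t+1}\|\sv(\x_{t})\|^{2},
\]
which for $t>t^{*}$ is a nonnegative supermartingale; with $\x^{*}=\Pi_{\XCal^{*}}(\x_{t^{*}})$, Doob's theorem plus the almost-sure control of the finitely many iterates before $t^{*}$ produces $\EE[\|\x_{t}-\x^{*}\|^{2}]\leq D^{2}$ uniformly in $t$.

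Next I would mirror the Case~II derivation of Theorem~\ref{Thm:OGD-adaptive}, but in expectation. Starting from the three-point identity $2\eta_{t+1}(\x_{t+1}-\x^{*})^{\top}\hat v_{t+1}=\|\x_{t}-\x_{t+1}\|^{2}+\|\x^{*}-\x_{t+1}\|^{2}-\|\x^{*}-\x_{t}\|^{2}$, decomposing $\hat v_{t+1}=\sv(\x_{t})+\xi_{t+1}$, and invoking cocoercivity, I obtain a per-step inequality analogous to Eq.~\eqref{inequality-OGD-adaptive-second} augmented with noise cross-terms. Summing from $0$ to $T$ and taking total expectation kills the martingale cross-term; applying Lemmas~\ref{Lemma:OGD-adaptive-first} and~\ref{Lemma:OGD-adaptive-second} pathwise with $a_{j}=\|\hat v_{j+1}\|^{2}$ and base $b=\beta+\log(T+1)$ bounds the resulting ``$\mathrm{I}$'' and ``$\mathrm{II}$'' terms exactly as in the deterministic case. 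The deterministic $\log(T+1)$ in the denominator is what lets those lemmas close without an a priori bound on $\sum\|\hat v_{j+1}\|^{2}$, ultimately delivering $\sum_{t=0}^{T}\EE[\|\sv(\x_{t})\|^{2}]=O(1)$ and $\eta_{t+1}^{-1}=O(\sqrt{\log t})$ uniformly.

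Finally I would invoke an adaptive analogue of Lemma~\ref{Lemma:OGD-noisy-relative-iterative}: for $t>t^{*}$ the assumption $\eta_{t+1}\leq\lambda$ holds, so the same derivation yields $\EE[\epsilon(\x_{t+1})]\leq \EE[\epsilon(\x_{t})]+\tau_{t}\EE[\eta_{t+1}^{-1}\|\sv(\x_{t})\|^{2}]/\lambda$. Summing as in the proof of Theorem~\ref{Thm:noisy-OGD-relative-last-rate} produces
\[
(T+1)\,\EE[\epsilon(\x_{T})]\leq \sum_{t=0}^{T}\EE[\epsilon(\x_{t})]+\tfrac{1}{\lambda}\sum_{t=0}^{T-1}\sum_{j=t}^{T-1}\tau_{j}\EE[\eta_{j+1}^{-1}\|\sv(\x_{j})\|^{2}],
\]
and dividing by $T+1$, using the $O(\sqrt{\log T})$ bound on $\eta_{j+1}^{-1}$ together with condition~\eqref{condition:noisy-OGD-relative-adaptive}, yields the stated rate: the $O(\log T/T)$ baseline plus the noise-driven $O(a(T))$. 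The principal obstacle is the interplay between the randomness of $\eta_{t+1}$ and the deterministic nature of Lemmas~\ref{Lemma:OGD-adaptive-first}--\ref{Lemma:OGD-adaptive-second}: they are applied pathwise, but the denominator defining $\eta_{t+1}$ is itself driven by the noise, and taking expectation after this pathwise application is precisely what forces condition~\eqref{condition:noisy-OGD-relative-adaptive} to strengthen Eq.~\eqref{condition:noisy-OGD-relative} by the extra $\log(T+1)$ factor.
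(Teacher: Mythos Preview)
Your overall strategy is the same as the paper's: two deterministic stopping times (forced finite by the explicit $\log(t+2)$ in Algorithm~\ref{algorithm:AdaOGD_noisy}), a Case~II summation bounded via Lemmas~\ref{Lemma:OGD-adaptive-first}--\ref{Lemma:OGD-adaptive-second}, and then the averaging trick from Theorem~\ref{Thm:noisy-OGD-relative-last-rate}. Two technical points, however, do not go through as you state them.

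First, the claim $\sum_{t=0}^{T}\EE[\|\sv(\x_{t})\|^{2}]=O(1)$ is too optimistic. When you apply Lemma~\ref{Lemma:OGD-adaptive-first} with base $b=\beta+\log(T+1)$, the leading term in the bound on $\mathrm{I}$ is $D^{2}\sqrt{\beta+\log(T+1)}$, which does not get absorbed into the left-hand side. The paper accordingly obtains $\sum_{t=0}^{T}\EE[\|\sv(\x_{t})\|^{2}]\leq C_{1}+C_{2}\sqrt{\log(T+1)}$, not $O(1)$. This slip is harmless for the final rate (the resulting $O(\sqrt{\log T}/T)$ in the first term is still dominated by $\log T/T$), but it matters for the second issue below.

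Second, and more seriously, your final step factors $\EE[\eta_{j+1}^{-1}\|\sv(\x_{j})\|^{2}]$ using an ``$O(\sqrt{\log T})$ bound on $\eta_{j+1}^{-1}$''. But $\eta_{j+1}^{-1}$ is random and correlated with $\|\sv(\x_{j})\|^{2}$ (both are driven by the same trajectory), and you only have a bound on $\EE[\eta_{j+1}^{-1}]$, not a pathwise one; the product does not split. The paper handles this differently: since $\eta_{j+1}$ is $\FCal_{j}$-measurable, one can divide the conditional one-step inequality by $\eta_{j+1}$ \emph{before} taking expectation to get $\EE[\eta_{j+1}^{-1}\|\sv(\x_{j})\|^{2}]\leq C\,\EE[\eta_{j+1}^{-2}(\|\x_{j}-\x^{*}\|^{2}-\|\x_{j+1}-\x^{*}\|^{2})]$, and the resulting sum telescopes (using monotonicity of $\eta_{t}$ and $\EE[\|\x_{j}-\x^{*}\|^{2}]\leq D^{2}$) to $\EE[D^{2}/\eta_{T}^{2}]=O(\log T)$. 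It is exactly this $O(\log T)$ (rather than $O(\sqrt{\log T})$) that, after multiplying by $\sum_{t}\tau_{t}$ and dividing by $T+1$, forces the strengthened condition~\eqref{condition:noisy-OGD-relative-adaptive}; your argument as written would only require~\eqref{condition:noisy-OGD-relative}, which is a signal that a step is too loose.
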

\begin{proof}
Since the step-size sequence $\{\eta_t\}_{t \geq 1}$ is decreasing and converges to zero, we define the first iconic time in our analysis as follows, 
\begin{equation*}
t^* \ = \ \max\left\{t \geq 0 \mid \eta_{t+1} > \frac{\lambda}{2(1+\tau)}\right\} \ < \ +\infty.  
\end{equation*}
First, we claim that $\EE[\|\x_t - \Pi_{\XCal^*}(\x_{t^*})\|] \leq D$ where $D = \max_{1 \leq t \leq t^*} \EE[\|\x_t - \Pi_{\XCal^*}(\x_{t^*})\|]$. Indeed, it suffices to show that $\EE[\|\x_t - \Pi_{\XCal^*}(\x_{t^*})\|] \leq \EE[\|\x_{t^*} - \Pi_{\XCal^*}(\x_{t^*})\|]$ holds for $t > t^*$. By the definition of $t^*$, we have $\eta_{t+1} < \lambda/(1+\tau)$ for all $t > t^*$. The desired inequality follows from Eq.~\eqref{inequality-noisy-OGD-relative-second} and the fact that $\EE[\epsilon(\x_t)] \geq 0$ for all $t > t^*$.  

Furthermore, we derive an upper bound for the term $\sum_{t=0}^T \|\sv(\x_t)\|^2$. Using the update formula (cf. Eq.~\eqref{Update:OGD-learning}) to obtain that
\begin{equation*}
(\x_{t+1} - \x^*)^\top(\sv(\x_t) + \xi_{t+1}) = \frac{1}{2\eta_{t+1}}\left(\|\x_t - \x_{t+1}\|^2 + \|\x^* - \x_{t+1}\|^2 - \|\x^* - \x_t\|^2\right).
\end{equation*}
Recall that $\GCal$ is $\lambda$-cocoercive and the noisy model is defined with relative random noise, we have
\begin{equation*}
\EE[(\x_t - \x^*)^\top(\sv(\x_t) + \xi_{t+1}) \mid \FCal_t] \ \geq \ \lambda\|\sv(\x_t)\|^2. 
\end{equation*}
Using Young's inequality, we have
\begin{equation*}
\EE[(\x_{t+1} - \x_t)^\top(\sv(\x_t) + \xi_{t+1}) \mid \FCal_t] \ \geq \ -\frac{\lambda\|\sv(\x_t)\|^2}{2} - \frac{(1+\tau)\EE[\|\x_{t+1} - \x_t\|^2 \mid \FCal_t]}{\lambda}.  
\end{equation*}
Putting these pieces together and taking an expectation yields that 
\begin{equation}\label{inequality-noisy-OGD-adaptive-first}
\lambda\EE[\|\sv(\x_t)\|^2] \ \leq \ \EE\left[\frac{\|\x^* - \x_t\|^2 - \|\x^* - \x_{t+1}\|^2}{\eta_{t+1}}\right] + \EE\left[\left(\frac{2(1+\tau)}{\lambda} - \frac{1}{\eta_{t+1}}\right)\|\x_t - \x_{t+1}\|^2\right].
\end{equation}
Recall that the step-size sequence $\{\eta_t\}_{t \geq 1}$ is nonincreasing and $\|\x_t - \Pi_{\XCal^*}(\x_{t^*})\| \leq D$, we let $\x^* = \Pi_{\XCal^*}(\x_{t^*})$ in Eq.~\eqref{inequality-noisy-OGD-adaptive-first} and obtain that 
\begin{equation*}
\sum_{t=0}^T \lambda\EE[\|\sv(\x_t)\|^2] \ \leq \ \EE\left[\frac{D^2}{\eta_{T+1}}\right] + \sum_{t=0}^T \EE\left[\left(\frac{2(1+\tau)}{\lambda} - \frac{1}{\eta_{t+1}}\right)\|\x_t - \x_{t+1}\|^2\right]. 
\end{equation*}
To proceed, we define the second iconic time as 
\begin{equation*}
t_1^* \ = \ \max\left\{t \geq 0 \mid \eta_{t+1} > \frac{\lambda}{4(1+\tau)D^2 + 2(1+\tau)}\right\} \ > \ t^*.  
\end{equation*}
It is clear that $t_1^* < +\infty$ and $\eta_{t+1} \leq \lambda/(2+2\tau)$ for all $t > t_1^*$ which implies that $(2+2\tau)/\lambda - 1/\eta_{t+1} \leq 0$. Assume $T$ sufficiently large without loss of generality, we have
\begin{equation*}
\sum_{t=0}^T \lambda\EE[\|\sv(\x_t)\|^2] \ \leq \ \EE\left[\frac{D^2}{\eta_{T+1}}\right] + \frac{2(1+\tau)}{\lambda}\left(\sum_{t=0}^{t_1^*} \EE[\|\x_t - \x_{t+1}\|^2]\right) \ = \ \text{I} + \text{II}. 
\end{equation*}
We also use Lemmas~A.1 and~A.2 from \citet{Bach-2019-Universal} to bound terms $\text{I}$ and $\text{II}$. For convenience, we present these two lemmas here: 
\begin{lemma}\label{Lemma:noisy-OGD-adaptive-first}
For a sequence of numbers $a_0, a_1, \ldots, a_n \in [0, a]$ and $b \geq 0$, the following inequality holds: 
\begin{equation*}
\sqrt{b + \sum_{i=0}^{n-1} a_i} - \sqrt{b} \ \leq \ \sum_{i=0}^n \frac{a_i}{\sqrt{b + \sum_{j=0}^{i-1} a_j}} \ \leq \ \frac{2a}{\sqrt{b}} + 3\sqrt{a} + 3\sqrt{b + \sum_{i=0}^{n-1} a_i}. 
\end{equation*}
\end{lemma}
\paragraph{Bounding $\text{I}$:} We derive from the definition of $\eta_t$ and Jensen's inequality that 
\begin{equation*}
\text{I} \ \leq \ D^2\sqrt{\beta + \log(T+1) + \sum_{j=0}^{T-1} \EE\left[\frac{\|\x_j - \x_{j+1}\|^2}{\eta_{j+1}^2}\right]}. 
\end{equation*}
Since $\EE[\|\x_t - \Pi_{\XCal^*}(\x_{t^*})\|] \leq D$ for all $t \geq 0$ and the notion of $\lambda$-cocercivity implies the notion of $(1/\lambda)$-Lipschiz continuity, we have
\begin{equation*}
\EE\left[\frac{\|\x_t - \x_{t+1}\|^2}{\eta_{t+1}^2}\right] \ \leq \ (2+2\tau)\EE[\|\sv(\x_t)\|^2] \ \leq \ \frac{(2+2\tau)\|\x_t - \Pi_{\XCal^*}(\x_{t^*})\|^2}{\lambda^2} \ \leq \ \frac{(2+2\tau)D^2}{\lambda^2}. 
\end{equation*}
Using the first inequality in Lemma~\ref{Lemma:noisy-OGD-adaptive-first}, we have
\begin{eqnarray}\label{inequality-noisy-OGD-adaptive-second}
\text{I} & \leq & D^2\sqrt{\beta + \log(T+1)} + \sum_{t=0}^T \frac{D^2\EE[\|\x_t - \x_{t+1}\|^2/\eta_{t+1}^2]}{\sqrt{\beta + \log(T+1) + \sum_{j=0}^{t-1} \EE[\|\x_j - \x_{j+1}\|^2/\eta_{j+1}^2]}} \nonumber \\
& \leq & D^2\sqrt{\beta + \log(T+1)} + \sum_{t=0}^{t_1^*} \frac{D^2\EE[\|\x_t - \x_{t+1}\|^2/\eta_{t+1}^2]}{\sqrt{\beta + \log(T+1) + \sum_{j=0}^{t-1} \EE[\|\x_j - \x_{j+1}\|^2/\eta_{j+1}^2]}} \nonumber \\ 
& & + \sum_{t=t_1^*+1}^T D^2\eta_{t+1} \EE\left[\frac{\|\x_t - \x_{t+1}\|^2}{\eta_{t+1}^2}\right] \nonumber \\ 
& \leq & D^2\sqrt{\beta + \log(T+1)} + \sum_{t=0}^{t_1^*} \frac{D^2\EE[\|\x_t - \x_{t+1}\|^2/\eta_{t+1}^2]}{\sqrt{\beta + \log(T+1) + \sum_{j=0}^{t-1} \EE[\|\x_j - \x_{j+1}\|^2/\eta_{j+1}^2]}} \nonumber \\
& & + \sum_{t=t_1^*+1}^T (2+2\tau)D^2\eta_{t+1}\EE[\|\sv(\x_t)\|^2].  
\end{eqnarray}
Since $\eta_{t+1} \leq \lambda/[4(1+\tau)D^2]$ for all $t > t_1^*$, we have
\begin{equation}\label{inequality-noisy-OGD-adaptive-third}
\sum_{t=t_1^*+1}^T (2+2\tau)D^2\eta_{t+1}\EE[\|\sv(\x_t)\|^2] \ \leq \ \sum_{t=t_1^*+1}^T \frac{\lambda\EE[\|\sv(\x_t)\|^2]}{2}.
\end{equation}
Using the second inequality in Lemma~\ref{Lemma:OGD-adaptive-first}, we have
\begin{eqnarray}\label{inequality-noisy-OGD-adaptive-fourth}
& & \sum_{t=0}^{t_1^*} \frac{D^2\EE[\|\x_t - \x_{t+1}\|^2/\eta_{t+1}^2]}{\sqrt{\beta + \log(T+1) + \sum_{j=0}^{t-1} \EE[\|\x_j - \x_{j+1}\|^2/\eta_{j+1}^2]}} \\ 
& \leq & \frac{(4+4\tau)D^2}{\lambda^2\sqrt{\beta + \log(T+1)}} + \frac{3D\sqrt{2+2\tau}}{\lambda} + 3\sqrt{\beta + \log(T+1) + \sum_{j=0}^{t_1^*-1} \EE\left[\frac{\|\x_j - \x_{j+1}\|^2}{\eta_{j+1}^2}\right]}. \nonumber
\end{eqnarray}
By the definition of $\eta_t$, we have
\begin{eqnarray}\label{inequality-noisy-OGD-adaptive-fifth}
& & \sqrt{\beta + \log(T+1) + \sum_{j=0}^{t_1^*-1} \EE\left[\frac{\|\x_j - \x_{j+1}\|^2}{\eta_{j+1}^2}\right]} \\ 
& \leq & \frac{1}{\eta_{t_1^*+1}} + \sqrt{\log(T+1)} \ < \ \frac{4(1+\tau)D^2+2(1+\tau)}{\lambda} + \sqrt{\log(T+1)}. \nonumber
\end{eqnarray}
Putting Eq.~\eqref{inequality-noisy-OGD-adaptive-third}-\eqref{inequality-noisy-OGD-adaptive-fifth} together yields that 
\begin{eqnarray*}
\text{I}  & \leq & D^2\sqrt{\beta + \log(T+1)} + \frac{(4+4\tau)D^2}{\lambda^2\sqrt{\beta + \log(T+1)}} + \frac{3D\sqrt{2+2\tau}}{\lambda} + \frac{12(1+\tau)D^2+6(1+\tau)}{\lambda} \\
& & + \sqrt{\log(T+1)} + \sum_{t=t_1^*+1}^T \frac{\lambda\EE[\|\sv(\x_t)\|^2]}{2}. 
\end{eqnarray*}
\paragraph{Bounding $\text{II}$:} Recalling that 
\begin{equation*}
\EE\left[\frac{\|\x_t - \x_{t+1}\|^2}{\eta_{t+1}^2}\right] \ \leq \ \frac{(2+2\tau)D^2}{\lambda^2},  
\end{equation*}
and $\eta_t \leq 1/\beta$ for all $t \geq 1$, we have
\begin{equation*}
\EE\left[\|\x_t - \x_{t+1}\|^2\right] \ \leq \ \frac{(2+2\tau)D^2}{\lambda^2 \beta^2},  
\end{equation*}
Putting these pieces together yields that 
\begin{equation*}
\text{II} \ = \ \frac{2(1+\tau)}{\lambda}\left(\sum_{t=0}^{t_1^*} \EE[\|\x_t - \x_{t+1}\|^2]\right) \ \leq \ \frac{4(1+\tau)^2D^2 t_1^*}{\lambda^3\beta^2}. 
\end{equation*}
Therefore, we have
\begin{eqnarray*}
\sum_{t=0}^T \frac{\lambda\EE[\|\sv(\x_t)\|^2]}{2} & \leq & D^2\sqrt{\beta + \log(T+1)} + \sqrt{\log(T+1)} + \frac{(4+4\tau)D^2}{\lambda^2\sqrt{\beta + \log(T+1)}} \\
& & + \frac{3D\sqrt{2+2\tau}}{\lambda} + \frac{12(1+\tau)D^2+6(1+\tau)}{\lambda} + \frac{4(1+\tau)^2D^2 t_1^*}{\lambda^3 \beta^2}. 
\end{eqnarray*}
By the definition, we have $t_1^* < +\infty$ is uniformly bounded. To this end, we conclude that $\sum_{t=0}^T \EE[\|\sv(\x_t)\|^2] \leq C_1 + C_2\sqrt{\log(T+1)}$, where $C_1 > 0$ and $C_2 > 0$ are universal constants. 

Finally, we proceed to bound the term $\epsilon(\x_T)$. Without loss of generality, we can start the sequence at a later index $t_1^*$ since $t_1^* < +\infty$. This implies that $\eta_{t+1} \leq \lambda/2(1+\tau)$. Using the last equation in the proof of  Lemma~\ref{Lemma:OGD-noisy-relative-iterative}, we have
\begin{equation*}
\EE[\epsilon(\x_T)] \ \leq \ \EE[\epsilon(\x_t)] + \sum_{j=t}^{T-1} \frac{\tau_j}{\lambda}\EE\left[\frac{\|\sv(\x_j)\|^2}{\eta_{j+1}}\right]. 
\end{equation*}
Summing up the above inequality over $t = t_1^*, \ldots, T+$ yields
\begin{equation*}
(T-t_1^*+1)\EE[\epsilon(\x_T)] \ \leq \ \sum_{t=t_1^*}^T \EE[\epsilon(\x_t)] + \frac{1}{\lambda}\left(\sum_{t=t_1^*}^{T-1} \sum_{j=t}^{T-1} \tau_j\EE\left[\frac{\|\sv(\x_j)\|^2}{\eta_{j+1}}\right]\right). 
\end{equation*}
Since $\{\tau_t\}_{t \geq 0}$ is an nonincreasing sequence, we have
\begin{equation*}
\sum_{t=t_1^*}^{T-1} \sum_{j=t}^{T-1} \tau_j\EE\left[\frac{\|\sv(\x_j)\|^2}{\eta_{j+1}}\right] \ \leq \ \left(\sum_{t=0}^{T-1} \tau_j\right)\left(\sum_{t=t_1^*}^{T-1} \EE\left[\frac{\|\sv(\x_j)\|^2}{\eta_{j+1}}\right]\right). 
\end{equation*}
Using Eq.~\eqref{inequality-noisy-OGD-relative-first} and $\eta_{t+1} \leq \lambda/2(1+\tau)$ for all $t > t_1^*$, we have
\begin{equation*}
\EE\left[\frac{\|\sv(\x_j)\|^2}{\eta_{j+1}}\right] \ \leq \ \EE\left[\frac{\|\x_j - \x^*\|^2 - \|\x_{j+1} - \x^*\|^2}{\eta_{j+1}^2}\right]. 
\end{equation*}
Note that $\{\eta_t\}_{t \geq 0}$ is an nonnegative and nonincreasing sequence and $\EE[\|\x_{j+1} - \x^*\|^2] \leq D^2$. Putting these pieces together yields that 
\begin{eqnarray*}
\sum_{t=t_1^*}^{T-1} \EE\left[\frac{\|\sv(\x_j)\|^2}{\eta_{j+1}}\right] & \leq & \EE\left[\frac{D^2}{\eta_T^2}\right] \ \leq \ D^2\left(\beta + \log(T) + \sum_{t=0}^T \EE\left[\frac{\|\x_t - \x_{t+1}\|^2}{\eta_{t+1}^2}\right]\right) \\
& \leq & D^2\left(\beta + \log(T) + 2(1+\tau)\sum_{t=0}^T \EE\left[\|\sv(\x_t)\|^2\right]\right) \\
& = & O(\log(T)). 
\end{eqnarray*}
Therefore, we conclude that 
\begin{equation*}
\EE[\epsilon(\x_T)] \ \leq \ \frac{\sum_{t=t_1^*}^T \EE[\epsilon(\x_t)]}{T-t_1^*+1} + \frac{C\log(T+1)}{\lambda(T-t_1^*+1)}\left(\sum_{t=0}^{T-1} \tau_t\right) \text{ for some } C > 0.  
\end{equation*}
This completes the proof. 
\end{proof}

\section*{Acknowledgments}
We would like to thank three anonymous referees for constructive suggestions that improve the quality of this paper. Zhengyuan Zhou was supported by the IBM Goldstine Fellowship. This work was supported in part by the Mathematical Data Science program of the Office of Naval Research under grant number N00014-18-1-2764.

\bibliographystyle{plainnat}
\bibliography{ref}

\appendix \onecolumn
\section{Convergence under Imperfect Feedback with Absolute Random Noise}\label{sec:imperfect_absolute}
In this section, we analyze the convergence of OGD-based learning on $\lambda$-cocercive games under imperfect feedback with absolute random noise~\eqref{model:noisy-absolute}. We first establish that OGD under noisy feedback converges almost surely in last-iterate to the set of Nash equilibria of a co-coercive game if $\sigma_t^2 \in (0, \sigma^2)$ for some $\sigma^2 < +\infty$ and the finite-time $O(1/\sqrt{T})$ convergence rate on $(1/T)\EE[\sum_{t=0}^T \epsilon(\x_t)]$ under properly diminishing step-size sequences. We also present a finite-time convergence rate on $\EE[\epsilon(\x_T)]$ if $\sigma_t^2$ satisfies certain conditions. 

\subsection{Almost Sure Last-Iterate Convergence}
We start by developing a key iterative formula for $\EE[\epsilon(\x_t)]$ in the following lemma. 
\begin{lemma}\label{Lemma:OGD-noisy-absolute-iterative}
Fix a $\lambda$-cocoercive game $\GCal$ with a continuous action space, $\GCal = (\NCal, \XCal = \prod_{i=1}^N \br^{n_i}, \{u_i\}_{i=1}^N)$, and let the set of Nash equilibria, $\XCal^*$, be nonempty. Under the noisy model~\eqref{model:noisy} with absolute random noise~\eqref{model:noisy-absolute} and letting the OGD-based learning run with a step-size sequence $\eta_t \in (0, \lambda)$, the noisy OGD iterate $\x_t$ satisfies
\begin{equation*}
\EE[\epsilon(\x_{t+1})] \ \leq \ \EE[\epsilon(\x_t)] + \frac{\sigma_t^2}{\lambda\eta_{t+1}}. 
\end{equation*}
\end{lemma}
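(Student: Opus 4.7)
The statement is a direct analogue of Lemma~\ref{Lemma:OGD-noisy-relative-iterative} with the relative-noise bound $\EE[\|\xi_{t+1}\|^2\mid\FCal_t]\leq \tau_t\|\sv(\x_t)\|^2$ replaced by the absolute-noise bound $\EE[\|\xi_{t+1}\|^2\mid\FCal_t]\leq \sigma_t^2$, so my plan is to replay the earlier argument verbatim and then substitute the new noise inequality at the final step.

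First, I would derive the same core one-step inequality that the authors invoked when proving Lemma~\ref{Lemma:OGD-noisy-relative-iterative}, namely
\begin{equation*}
\EE[\epsilon(\x_{t+1})\mid\FCal_t]-\epsilon(\x_t)\ \leq\ \frac{\EE[\|\xi_{t+1}\|^2\mid\FCal_t]}{\lambda\eta_{t+1}}+\left(1-\frac{\lambda}{\eta_{t+1}}\right)\EE\bigl[\|\sv(\x_{t+1})-\sv(\x_t)\|^2\,\big|\,\FCal_t\bigr].
\end{equation*}
The route to this inequality uses the update $\x_{t+1}-\x_t=\eta_{t+1}(\sv(\x_t)+\xi_{t+1})$ together with $\lambda$-cocoercivity applied to the pair $(\x_t,\x_{t+1})$, which gives a bound on $\sv(\x_t)^\top(\sv(\x_{t+1})-\sv(\x_t))$ in terms of $\|\sv(\x_{t+1})-\sv(\x_t)\|^2$ and a cross term in $\xi_{t+1}$. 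Expanding $\|\sv(\x_{t+1})\|^2=\|\sv(\x_t)+(\sv(\x_{t+1})-\sv(\x_t))\|^2$ and applying Young's inequality with weight $\lambda/\eta_{t+1}$ to the $\xi_{t+1}$-cross-term produces exactly the coefficients above; this is standard cocoercive-gradient bookkeeping and closely mirrors Lemma~\ref{Lemma:key-inequality-noisy-OGD}.

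Second, I would observe that the hypothesis $\eta_{t+1}\in(0,\lambda)$ makes $1-\lambda/\eta_{t+1}\leq 0$, so the second term on the right can simply be dropped. At this point the proofs for the two noise models diverge only in how one bounds the remaining term $\EE[\|\xi_{t+1}\|^2\mid\FCal_t]$: under the absolute-noise model~\eqref{model:noisy-absolute} this is at most $\sigma_t^2$, yielding
\begin{equation*}
\EE[\epsilon(\x_{t+1})\mid\FCal_t]-\epsilon(\x_t)\ \leq\ \frac{\sigma_t^2}{\lambda\eta_{t+1}}.
\end{equation*}

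Third, I would take total expectations on both sides and use the tower property to conclude $\EE[\epsilon(\x_{t+1})]\leq \EE[\epsilon(\x_t)]+\sigma_t^2/(\lambda\eta_{t+1})$, which is the claim. Since each step simply reuses machinery already established in the paper, there is no real obstacle; the only point requiring a hint of care is making sure the sign of $1-\lambda/\eta_{t+1}$ is correctly exploited so that no dependence on $\|\sv(\x_t)\|^2$ leaks into the final bound, unlike the relative-noise case where precisely such a term is retained.
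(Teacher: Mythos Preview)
Your proposal is correct and follows essentially the same route as the paper: derive the one-step inequality $\EE[\epsilon(\x_{t+1})\mid\FCal_t]-\epsilon(\x_t)\leq \EE[\|\xi_{t+1}\|^2\mid\FCal_t]/(\lambda\eta_{t+1})+(1-\lambda/\eta_{t+1})\EE[\|\sv(\x_{t+1})-\sv(\x_t)\|^2\mid\FCal_t]$ via cocoercivity and Young's inequality, drop the second term using $\eta_{t+1}<\lambda$, insert the absolute-noise bound $\sigma_t^2$, and take expectations. The only cosmetic difference is that the paper applies Young's inequality with parameter $\lambda$ to the cross term $-(\sv(\x_{t+1})-\sv(\x_t))^\top\xi_{t+1}$ and then multiplies through by $2/\eta_{t+1}$, rather than using weight $\lambda/\eta_{t+1}$ directly, but this yields the identical inequality.
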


We are now ready to establish last-iterate convergence in a strong, almost sure sense. Note that the conditions imposed on $\sigma_t^2$ and $\eta_t$ are minimal. 
\begin{theorem}\label{Thm:noisy-OGD-absolute-last}
Fix a $\lambda$-cocoercive game $\GCal$ with a continuous action space, $\GCal = (\NCal, \XCal = \prod_{i=1}^N \br^{n_i}, \{u_i\}_{i=1}^N)$, and let the set of Nash equilibria, $\XCal^*$, be nonempty. Consider the noisy model~\eqref{model:noisy} with absolute random noise~\eqref{model:noisy-absolute} satisfying $\sigma_t^2 \in (0, \sigma^2]$ for some $\sigma^2 < +\infty$ and letting the OGD-based learning run with a step-size sequence satisfying
\begin{equation*}
\sum_{t=1}^\infty \eta_t \ = \ + \infty, \qquad \sum_{t=1}^\infty \eta_t^2 \ < \ + \infty.  
\end{equation*}
Then the noisy OGD iterate $\x_t$ converges to $\XCal^*$ almost surely. 
\end{theorem}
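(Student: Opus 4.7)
The plan is to set up a Robbins--Siegmund style almost-supermartingale inequality for the squared distance to any fixed Nash equilibrium and then combine its two conclusions (convergence of the distance and summability of a weighted gradient series) with $\sum_t \eta_t = +\infty$ to pin down a limit point.

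First, I would invoke Lemma~\ref{Lemma:key-inequality-noisy-OGD} (whose proof does not depend on the noise model) at an arbitrary $\x^* \in \XCal^*$, and take conditional expectation with respect to $\FCal_t$. The martingale cross term $2\eta_{t+1}(\x_t - \x^*)^\top\xi_{t+1}$ vanishes and the variance term is bounded using~\eqref{model:noisy-absolute}, giving
$$\EE[\|\x_{t+1} - \x^*\|^2 \mid \FCal_t] \ \leq \ \|\x_t - \x^*\|^2 + 2\sigma^2\eta_{t+1}^2 - (2\lambda\eta_{t+1} - 2\eta_{t+1}^2)\|\sv(\x_t)\|^2.$$
Since $\sum_t \eta_t^2 < +\infty$ forces $\eta_t \to 0$, there is a deterministic $T_0$ such that $2\lambda\eta_{t+1} - 2\eta_{t+1}^2 \geq \lambda\eta_{t+1} > 0$ for all $t \geq T_0$. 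The Robbins--Siegmund almost-supermartingale theorem then yields, on an almost-sure event, both (i) $\|\x_t - \x^*\|^2$ converges to a finite random limit and (ii) $\sum_{t \geq T_0} \eta_{t+1}\|\sv(\x_t)\|^2 < +\infty$.

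Second, I would exploit $\sum_t \eta_t = +\infty$: if $\liminf_t \|\sv(\x_t)\|^2 > 0$ on a set of positive probability, then $\sum_t \eta_{t+1}\|\sv(\x_t)\|^2 = +\infty$ there, contradicting (ii). Hence $\liminf_t \|\sv(\x_t)\| = 0$ almost surely. Conclusion (i) implies $\{\x_t\}$ is bounded on the same event, so a random subsequence $\x_{t_k} \to \x_\infty$ exists along which $\sv(\x_{t_k}) \to 0$; continuity of $\sv$ gives $\sv(\x_\infty) = 0$, so $\x_\infty \in \XCal^*$.

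Third, to upgrade the subsequential limit point to a full limit, I would apply conclusion (i) simultaneously at a countable dense subset $\{\x_k^*\}_{k\geq 1}$ of $\XCal^*$ (which is closed since $\XCal^* = \sv^{-1}(0)$), intersecting the corresponding almost-sure events. Along the subsequence $\x_{t_k} \to \x_\infty$, $\lim_t \|\x_t - \x_k^*\| = \|\x_\infty - \x_k^*\|$, and choosing $\x_k^*$ close to $\x_\infty$ and applying the triangle inequality then forces $\x_t \to \x_\infty \in \XCal^*$.

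The main obstacle is the last step. Since $\XCal^*$ need not be a singleton (a generic feature of cocoercive games, as opposed to strictly monotone ones), convergence of $\|\x_t - \x^*\|^2$ for one fixed $\x^*$ does not by itself identify a limit; one must leverage the Fejér-type monotonicity simultaneously for a dense family of reference points. A secondary subtlety is that the almost-supermartingale inequality only holds once $\eta_{t+1}$ is small enough (hence the deterministic cutoff $T_0$), and one should absorb the finitely many initial terms without destroying the summability used in the Robbins--Siegmund step.
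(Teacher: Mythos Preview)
Your proof is correct and takes a somewhat different route from the paper's. Both start from the same conditional inequality obtained from Lemma~\ref{Lemma:key-inequality-noisy-OGD}, but the paper then builds the compensated supermartingale $M_t = \|\x_t - \x^*\|^2 + 2\sigma^2\sum_{j>t}\eta_j^2$, applies Doob's convergence theorem directly, and establishes recurrence of neighborhoods of $\XCal^*$ by contradiction using the law of large numbers for martingale differences (Hall--Heyde). You instead invoke Robbins--Siegmund, which in one stroke gives both the almost-sure convergence of $\|\x_t - \x^*\|^2$ and the almost-sure summability of $\eta_{t+1}\|\sv(\x_t)\|^2$; combining the latter with $\sum_t\eta_t=+\infty$ yields $\liminf_t\|\sv(\x_t)\|=0$ without any LLN machinery. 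For the passage from a cluster point in $\XCal^*$ to full convergence, the paper informally ``redefines $M_t$'' at the (random) accumulation point, whereas your countable-dense-subset / stochastic Opial argument handles the non-uniqueness of equilibria in $\lambda$-cocoercive games more transparently. Your approach is more self-contained and the final identification step is cleaner; the paper's approach, on the other hand, avoids importing Robbins--Siegmund as a black box and keeps everything at the level of Doob plus an explicit recurrence argument.
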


\subsection{Finite-Time Convergence Rate: Time-Average and Last-Iterate}
For completeness, we characterize two types of rates: the time-average and last-iterate convergence rate, as formalized by the following theorems. 
\begin{theorem}\label{Thm:noisy-OGD-absolute-average-rate}
Fix a $\lambda$-cocoercive game $\GCal$ with a continuous action space, $\GCal = (\NCal, \XCal = \prod_{i=1}^N \br^{n_i}, \{u_i\}_{i=1}^N)$, and let the set of Nash equilibria, $\XCal^*$, be nonempty. Under the noisy model~\eqref{model:noisy} with absolute random noise~\eqref{model:noisy-absolute} satisfying $\sigma_t^2 \in (0, \sigma^2]$ for some $\sigma^2 < +\infty$ and letting the OGD-based learning run with a step-size sequence $\eta_t = c/\sqrt{t}$ for some constant $c \in (0, \lambda)$, the noisy OGD iterate $\x_t$ satisfies
\begin{equation*}
\frac{1}{T+1}\left(\EE\left[\sum_{t=0}^T \epsilon(\x_t)\right]\right) \ = \ O\left(\frac{\log(T)}{\sqrt{T}}\right). 
\end{equation*}
\end{theorem}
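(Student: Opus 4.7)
My plan is to apply Lemma~\ref{Lemma:key-inequality-noisy-OGD}, take conditional expectations against $\FCal_t$, and exploit the absolute noise hypothesis $\EE[\xi_{t+1}\mid\FCal_t]=0$ and $\EE[\|\xi_{t+1}\|^2\mid\FCal_t]\le\sigma^2$. After a further full expectation this yields
$$\EE[\|\x_{t+1}-\x^*\|^2] \le \EE[\|\x_t-\x^*\|^2] + 2\eta_{t+1}^2\sigma^2 - 2\eta_{t+1}(\lambda-\eta_{t+1})\EE[\epsilon(\x_t)].$$
Since $c<\lambda$ forces $\eta_{t+1}=c/\sqrt{t+1}<\lambda$ for all $t\ge 0$, rearranging produces the central per-step estimate
$$\EE[\epsilon(\x_t)] \le \frac{\EE[\|\x_t-\x^*\|^2]-\EE[\|\x_{t+1}-\x^*\|^2]}{2\eta_{t+1}(\lambda-\eta_{t+1})} + \frac{\eta_{t+1}\sigma^2}{\lambda-\eta_{t+1}},$$
which I will then sum over $t=0,\ldots,T$.

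Before summing, I would first establish the mild growth bound $\EE[\|\x_t-\x^*\|^2]=O(\log t)$. Dropping the nonpositive gradient term from the recursion gives $\EE[\|\x_{t+1}-\x^*\|^2]\le \EE[\|\x_t-\x^*\|^2]+2c^2\sigma^2/(t+1)$, which telescopes to $\EE[\|\x_t-\x^*\|^2]\le \|\x_0-\x^*\|^2+2c^2\sigma^2\sum_{s=1}^t 1/s=O(\log t)$. This logarithmic drift is the key substitute under absolute noise for the monotonicity of $\EE[\|\x_t-\x^*\|^2]$ that was freely available in the relative-noise proof of Theorem~\ref{Thm:noisy-OGD-relative-average-rate}.

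Writing $\alpha_t:=1/[2\eta_{t+1}(\lambda-\eta_{t+1})]$ and $M_t:=\EE[\|\x_t-\x^*\|^2]$, summation of the per-step estimate gives $\sum_{t=0}^T \EE[\epsilon(\x_t)] \le \sum_{t=0}^T \alpha_t(M_t-M_{t+1}) + \sum_{t=0}^T \eta_{t+1}\sigma^2/(\lambda-\eta_{t+1})$. The noise part is immediate: $\sum_{t=0}^T \eta_{t+1}/(\lambda-\eta_{t+1})\le (\lambda-c)^{-1}c\sum_{t=1}^{T+1} 1/\sqrt{t}=O(\sqrt{T})$. For the first (telescoping) part I would apply Abel summation to write $\sum_{t=0}^T \alpha_t(M_t-M_{t+1}) = \alpha_0 M_0 - \alpha_T M_{T+1} + \sum_{t=1}^T(\alpha_t-\alpha_{t-1})M_t$ and drop the nonpositive $-\alpha_T M_{T+1}$. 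Since $\eta\mapsto\eta(\lambda-\eta)$ is strictly increasing on $(0,\lambda/2]$, the coefficients $\alpha_t$ become nondecreasing past the finite index $T_0$ where $\eta_{T_0+1}\le\lambda/2$; splitting the sum at $T_0$ and using the $O(\log t)$ bound on $M_t$ together with $\alpha_T\le \sqrt{T+1}/(\lambda c)$ for $t>T_0$ gives $\sum_{t=1}^T(\alpha_t-\alpha_{t-1})M_t\le O(1)+\max_{t\le T}M_t\cdot(\alpha_T-\alpha_{T_0})=O(\log T)\cdot O(\sqrt{T})$. Combining both contributions yields $\sum_{t=0}^T \EE[\epsilon(\x_t)]=O(\sqrt{T}\log T)$, and dividing by $T+1$ delivers the advertised $O(\log T/\sqrt{T})$ rate.

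The principal obstacle is precisely the Abel summation: absolute noise prevents $M_t$ from being monotone in expectation, so the direct telescoping used in the relative-noise analysis is unavailable and one must instead couple the logarithmic growth of $M_t$ with the $\sqrt{T}$ growth of $\alpha_t$, which is exactly what produces the extra $\log T$ factor in the final rate. Once the elementary monotonicity of $\eta\mapsto\eta(\lambda-\eta)$ on $(0,\lambda/2]$ is invoked to handle the eventual sign of $\alpha_t-\alpha_{t-1}$, the rest reduces to routine estimates on $\sum 1/\sqrt{t}$ and $\sum 1/t$.
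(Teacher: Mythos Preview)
Your proposal is correct and follows essentially the same route as the paper: apply Lemma~\ref{Lemma:key-inequality-noisy-OGD}, take expectations to get a per-step bound on $\EE[\epsilon(\x_t)]$, establish the $O(\log t)$ growth of $\EE[\|\x_t-\x^*\|^2]$ via telescoping, and then Abel-sum so that the $O(\log T)$ bound on $M_t$ couples with the $O(\sqrt{T})$ growth of the coefficients to produce $O(\sqrt{T}\log T)$. The only notable difference is that the paper immediately bounds $\lambda-\eta_{t+1}\ge\lambda-c$ before rearranging, so its coefficient becomes $1/[2(\lambda-c)\eta_{t+1}]$, which is automatically nondecreasing in $t$; this makes the Abel summation direct and avoids your split at $T_0$ and the monotonicity discussion of $\eta\mapsto\eta(\lambda-\eta)$. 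Your tighter per-step constant costs you that extra (but harmless) case analysis, while the paper's looser constant buys a shorter argument; the resulting rates are identical.
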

Inspired by Lemma~\ref{Lemma:OGD-noisy-absolute-iterative}, we impose an intuitive condition on the variance of noisy process $\{\sigma_t^2\}_{t \geq 0}$. More specifically, there exists a function $\alpha: \br^+ \rightarrow \br^+$ satisfying $a(t) = o(1)$ and $a(t) = \Omega(1/t)$ such that 
\begin{equation}\label{condition:noisy-OGD-absolute}
\frac{1}{T+1}\left(\sum_{t=0}^{T-1} (t+1)\sigma_t^2\right) \ = \ O(a(T)). 
\end{equation}
Under this condition, the noisy iterate generated by the OGD-based learning achieves the finite-time last-iterate convergence rate regardless of a sequence of possibly constant step-sizes $\eta_t$ satisfying $0 < \underline{\eta} \leq \eta_t \leq \overline{\eta} < \lambda$ for all $t \geq 1$.   
\begin{theorem}\label{Thm:noisy-OGD-absolute-last-rate}
Fix a $\lambda$-cocoercive game $\GCal$ with a continuous action space, $\GCal = (\NCal, \XCal = \prod_{i=1}^N \br^{n_i}, \{u_i\}_{i=1}^N)$, and let the set of Nash equilibria, $\XCal^*$, be nonempty. Under the noisy model~\eqref{model:noisy} with absolute random noise~\eqref{model:noisy-absolute} satisfying Eq.~\eqref{condition:noisy-OGD-absolute} and letting the OGD-based learning run with an nonincreasing step-size sequence satisfying $0 < \underline{\eta} \leq \eta_t \leq \overline{\eta} < \lambda$ for all $t \geq 1$, the noisy OGD iterate $\x_t$ satisfies
\begin{equation*}
\EE[\epsilon(\x_T)] \ = \ O(a(T)). 
\end{equation*}
\end{theorem}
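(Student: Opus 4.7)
The plan is to mirror the structure of the proof of Theorem~\ref{Thm:noisy-OGD-relative-last-rate}, replacing the relative noise bound with the absolute one and exploiting the weighted summability condition in Eq.~\eqref{condition:noisy-OGD-absolute}. The key monotonicity-type recursion is already available in Lemma~\ref{Lemma:OGD-noisy-absolute-iterative}: iterating it from stage $t$ to stage $T$ gives
\[
\EE[\epsilon(\x_T)] \ \leq \ \EE[\epsilon(\x_t)] + \frac{1}{\lambda}\sum_{j=t}^{T-1} \frac{\sigma_j^2}{\eta_{j+1}}.
\]
I would then sum this inequality over $t = 0,1,\ldots,T$ and swap the order of the double sum: $\sum_{t=0}^{T-1}\sum_{j=t}^{T-1} = \sum_{j=0}^{T-1}(j+1)$. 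Using the lower bound $\eta_{j+1}\geq\underline{\eta}$, this produces
\[
(T+1)\EE[\epsilon(\x_T)] \ \leq \ \sum_{t=0}^T \EE[\epsilon(\x_t)] + \frac{1}{\lambda\underline{\eta}}\sum_{j=0}^{T-1}(j+1)\sigma_j^2,
\]
and the last term is $O((T+1)a(T))$ directly by Eq.~\eqref{condition:noisy-OGD-absolute}.

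The remaining task is to bound $\sum_{t=0}^T \EE[\epsilon(\x_t)]$ by $O((T+1)a(T))$. For this I would return to the key one-step inequality of Lemma~\ref{Lemma:key-inequality-noisy-OGD} and take conditional expectations; the absolute noise bound $\EE[\|\xi_{t+1}\|^2\mid\FCal_t]\leq\sigma_t^2$ and $\EE[\xi_{t+1}\mid\FCal_t]=0$ yield
\[
\EE[\|\x_{t+1}-\x^*\|^2 \mid \FCal_t] \ \leq \ \|\x_t - \x^*\|^2 - 2(\lambda-\eta_{t+1})\eta_{t+1}\|\sv(\x_t)\|^2 + 2\eta_{t+1}^2\sigma_t^2.
\]
Using $\overline{\eta}<\lambda$ and $\underline{\eta}\leq\eta_t\leq\overline{\eta}$, I can take full expectations, rearrange, and telescope over $t=0,\ldots,T$ to get
\[
2(\lambda-\overline{\eta})\underline{\eta}\sum_{t=0}^T \EE[\|\sv(\x_t)\|^2] \ \leq \ \|\x_0-\x^*\|^2 + 2\overline{\eta}^2\sum_{t=0}^T \sigma_t^2.
\]
Since $\sum_{t=0}^{T-1}\sigma_t^2 \leq \sum_{t=0}^{T-1}(t+1)\sigma_t^2 = O((T+1)a(T))$ and the assumption $a(T)=\Omega(1/T)$ absorbs the $O(1)$ term $\|\x_0-\x^*\|^2$ into $O((T+1)a(T))$, we obtain $\sum_{t=0}^T\EE[\epsilon(\x_t)] = O((T+1)a(T))$.

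Plugging these two bounds into the summed iterative formula yields $(T+1)\EE[\epsilon(\x_T)] = O((T+1)a(T))$, i.e.\ $\EE[\epsilon(\x_T)] = O(a(T))$, which is the claim. The main obstacle I anticipate is purely bookkeeping: making sure the constants emerging from the $O(1)$ initial-state term and from the $\sigma_t^2$-accumulation are correctly absorbed into $a(T)$ using $a(T)=\Omega(1/T)$, and verifying that the double-sum reorganization gives precisely the weighted sum $\sum_j(j+1)\sigma_j^2$ that the hypothesis controls. Apart from this, the argument is a direct adaptation of the relative-noise argument used for Theorem~\ref{Thm:noisy-OGD-relative-last-rate}, with no need for the extra double-stopping-time machinery because the step-sizes are assumed to lie in a fixed bounded interval rather than being chosen adaptively.
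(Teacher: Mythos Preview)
Your proposal is correct and follows essentially the same two-part structure as the paper's proof: iterate Lemma~\ref{Lemma:OGD-noisy-absolute-iterative} from $t$ to $T$, sum over $t=0,\ldots,T$, swap the double sum to produce the weighted term $\sum_j (j+1)\sigma_j^2$, and then separately bound $\sum_{t=0}^T\EE[\epsilon(\x_t)]$ via the one-step descent inequality from Lemma~\ref{Lemma:key-inequality-noisy-OGD}.

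The only noteworthy difference is in how you control $\sum_{t=0}^T\EE[\epsilon(\x_t)]$. The paper first divides the one-step inequality by $\eta_{t+1}$ and then telescopes, which introduces cross-terms $(1/\eta_{t+1}-1/\eta_t)\,\EE[\|\x_t-\x^*\|^2]$ and therefore requires a separate bound on $\EE[\|\x_t-\x^*\|^2]$ in terms of $\sum_j\sigma_j^2$. You instead telescope the inequality \emph{before} dividing, using $(\lambda-\eta_{t+1})\eta_{t+1}\geq(\lambda-\overline{\eta})\underline{\eta}$ and $\eta_{t+1}^2\leq\overline{\eta}^2$ directly. Because the theorem hypotheses place $\eta_t$ in a fixed interval $[\underline{\eta},\overline{\eta}]$, your shortcut is valid and yields the cleaner bound
\[
2(\lambda-\overline{\eta})\underline{\eta}\sum_{t=0}^T\EE[\|\sv(\x_t)\|^2]\ \leq\ \|\x_0-\x^*\|^2 + 2\overline{\eta}^2\sum_{t=0}^T\sigma_t^2,
\]
avoiding the intermediate $\EE[\|\x_t-\x^*\|^2]$ estimates altogether. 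Both routes land at the same $O((T+1)a(T))$ bound after invoking $\sum_t\sigma_t^2\leq\sum_t(t+1)\sigma_t^2$ and $a(T)=\Omega(1/T)$; yours is just the more economical version under the stated step-size assumption.
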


\section{Proof of Lemma~\ref{Lemma:OGD-nonadaptive}}
Since $\XCal_i = \br^{n_i}$, we have
\begin{eqnarray*}
& & \|x_{i, t+2} - x_{i, t+1}\|^2 \\ 
& = & \|x_{i, t+1} - x_{i, t} + \eta(v_i(\x_{t+1}) - v_i(\x_t))\|^2 \\
& = & \|x_{i, t+1} - x_{i, t}\|^2 + 2\eta(x_{i, t+1} - x_{i, t})^\top(v_i(\x_{t+1}) - v_i(\x_t)) + \eta^2\|v_i(\x_{t+1}) - v_i(\x_t)\|^2.   
\end{eqnarray*}
Expanding the right-hand side of the above inequality and summing up the resulting inequality over $i \in \NCal$ yields that 
\begin{eqnarray}\label{inequality-OGD-nonadaptive-first}
& & \|\x_{t+2} - \x_{t+1}\|^2 \ = \ \sum_{i \in \NCal} \|x_{i, t+2} - x_{i, t+1}\|^2 \\
& \leq & \sum_{i \in \NCal} \left(\|x_{i, t+1} - x_{i, t}\|^2 + \eta^2\|v_i(\x_{t+1}) - v_i(\x_t)\|^2 + 2\eta(x_{i, t+1} - x_{i, t})^\top(v_i(\x_{t+1}) - v_i(\x_t))\right) \nonumber \\
& = & \|\x_{t+1} - \x_t\|^2 + 2\eta(\x_{t+1} - \x_t)^\top(\sv(\x_{t+1}) - \sv(\x_t)) + \eta^2\|\sv(\x_{t+1}) - \sv(\x_t)\|^2. \nonumber
\end{eqnarray}
Since $\GCal$ is a $\lambda$-cocoercive game, we have
\begin{equation*}
(\x_{t+1} - \x_t)^\top(\sv(\x_{t+1}) - \sv(\x_t)) \leq -\lambda\|\sv(\x_{t+1}) - \sv(\x_t)\|^2. 
\end{equation*}
Plugging the above equation into Eq.~\eqref{inequality-OGD-nonadaptive-first} together with the condition $\eta \in (0, \lambda]$ yields that 
\begin{equation*}
\|\x_{t+2} - \x_{t+1}\|^2 \ \leq \ \|\x_{t+1} - \x_t\|^2. 
\end{equation*}
Using the update formula in Eq.~\eqref{Update:OGD-learning}, we have $\|\sv(\x_{t+1})\| \leq \|\sv(\x_t)\|$ for all $t \geq 0$. 

Then we proceed to bound $\sum_{t=0}^{+\infty} \|\sv(\x_t)\|^2$. Indeed, for any $x_i \in \XCal_i$, we have
\begin{equation*}
(x_i - x_{i, t+1})^\top(x_{i, t+1} - x_{i, t} - \eta v_i(\x_t)) \ = \ 0. 
\end{equation*}
Applying the equality $a^\top b = (\|a+b\|^2 - \|a\|^2 - \|b\|^2)/2$ yields that
\begin{equation*}
(x_{i, t+1} - x_i)^\top v_i(\x_t) \ = \ \frac{1}{2\eta}\left(\|x_{i, t} - x_{i, t+1}\|^2 + \|x_i - x_{i, t+1}\|^2 - \|x_i - x_{i, t}\|^2\right). 
\end{equation*}
Summing up the resulting inequality over $i \in \NCal$ yields that 
\begin{equation*}
(\x_{t+1} - \x)^\top \sv(\x_t) \ = \ \frac{1}{2\eta}\left(\|\x_t - \x_{t+1}\|^2 + \|\x - \x_{t+1}\|^2 - \|\x - \x_t\|^2\right), \quad \forall \x \in \XCal. 
\end{equation*}
Letting $\x = \x^* \in \XCal^*$, we have
\begin{equation}\label{inequality-OGD-nonadaptive-second}
(\x_{t+1} - \x^*)^\top\sv(\x_t) \ = \ \frac{1}{2\eta}\left(\|\x_t - \x_{t+1}\|^2 + \|\x^* - \x_{t+1}\|^2 - \|\x^* - \x_t\|^2\right).
\end{equation}
Furthermore, we have
\begin{equation*}
(\x_{t+1} - \x^*)^\top\sv(\x_t) = (\x_t - \x^*)^\top\sv(\x_t) + (\x_{t+1} - \x_t)^\top\sv(\x_t). 
\end{equation*}
Since $\GCal$ is a $\lambda$-cocoercive game and $\sv(\x^*)=0$, we have
\begin{equation*}
(\x_t - \x^*)^\top\sv(\x_t) \ = \ (\x_t - \x^*)^\top(\sv(\x_t) - \sv(\x^*)) \ \leq \ -\lambda\|\sv(\x_t) - \sv(\x^*)\|^2 \ = \ -\lambda\|\sv(\x_t)\|^2. 
\end{equation*}
By Young's inequality we have
\begin{equation*}
(\x_{t+1} - \x_t)^\top\sv(\x_t) \ \leq \ \frac{\lambda\|\sv(\x_t)\|^2}{2} + \frac{\|\x_{t+1} - \x_t\|^2}{2\lambda}. 
\end{equation*}
Putting these pieces together yields that
\begin{equation}\label{inequality-OGD-nonadaptive-third}
(\x_{t+1} - \x^*)^\top\sv(\x_t) \ \leq \ \frac{\|\x_{t+1} - \x_t\|^2}{2\lambda} - \frac{\lambda\|\sv(\x_t)\|^2}{2}. 
\end{equation}
Plugging Eq.~\eqref{inequality-OGD-nonadaptive-third} into Eq.~\eqref{inequality-OGD-nonadaptive-second} together with the condition $\eta \in (0, \lambda]$ yields that 
\begin{equation*}
\lambda\|\sv(\x_t)\|^2 \ \leq \ \frac{\|\x^* - \x_t\|^2 - \|\x^* - \x_{t+1}\|^2}{\eta}. 
\end{equation*}
Summing up the above inequality over $t = 0, 1, 2, \ldots$ and using the boundedness of $\XCal$ yields that
\begin{equation*}
\sum_{t=0}^{+\infty} \|\sv(\x_t)\|^2 \ \leq \ \frac{\|\x^* - \x_0\|^2}{\eta\lambda}. 
\end{equation*}
Note that $\x^* \in \XCal^*$ is chosen arbitrarily, we let $\x^* = \Pi_{\XCal^*}(\x_0)$ and conclude the desired inequality. 

\section{Postponed Proofs in Section~\ref{sec:imperfect_absolute}}
In this section, we present the missing proofs in Section~\ref{sec:imperfect_absolute}. 
\subsection{Proof of Lemma~\ref{Lemma:OGD-noisy-absolute-iterative}}
By the definition of $\epsilon(\x)$, we have
\begin{eqnarray*}
\epsilon(\x_{t+1}) - \epsilon(\x_t) & = & \|\sv(\x_{t+1})\|^2 - \|\sv(\x_t)\|^2 \ = \ (\sv(\x_{t+1}) - \sv(\x_t))^\top(\sv(\x_{t+1}) + \sv(\x_t)) \\
& = & 2(\sv(\x_{t+1}) - \sv(\x_t))^\top\sv(\x_t) + \|\sv(\x_{t+1}) - \sv(\x_t)\|^2. 
\end{eqnarray*}
Using the update formula in Eq.~\eqref{Update:OGD-learning}, it holds that $\sv(\x_t) = \eta_{t+1}^{-1}(\x_{t+1} - \x_t) - \xi_{t+1}$. Therefore, we have
\begin{equation*}
\epsilon(\x_{t+1}) - \epsilon(\x_t) \ = \ \frac{2}{\eta_{t+1}}\left((\sv(\x_{t+1}) - \sv(\x_t))^\top(\x_{t+1} - \x_t) - (\sv(\x_{t+1}) - \sv(\x_t))^\top\xi_{t+1}\right) + \|\sv(\x_{t+1}) - \sv(\x_t)\|^2. 
\end{equation*}
Since $\GCal$ is a $\lambda$-cocoercive game, we have
\begin{equation*}
(\sv(\x_{t+1}) - \sv(\x_t))^\top(\x_{t+1} - \x_t) \ \leq \ -\lambda\|\sv(\x_{t+1}) - \sv(\x_t)\|^2. 
\end{equation*}
Using  Young's inequality, we have
\begin{equation*}
- (\sv(\x_{t+1}) - \sv(\x_t))^\top\xi_{t+1} \ \leq \ \frac{\lambda\|\sv(\x_{t+1}) - \sv(\x_t)\|^2}{2} + \frac{\|\xi_{t+1}\|^2}{2\lambda}. 
\end{equation*}
Putting these pieces together yields that 
\begin{equation}\label{inequality-noisy-OGD-absolute-second}
\epsilon(\x_{t+1}) - \epsilon(\x_t) \ \leq \ \frac{\|\xi_{t+1}\|^2}{\lambda\eta_{t+1}} + \left(1 - \frac{\lambda}{\eta_{t+1}}\right)\|\sv(\x_{t+1}) - \sv(\x_t)\|^2.
\end{equation}
Taking an expectation of Eq.~\eqref{inequality-noisy-OGD-absolute-second} conditioned on $\FCal_t$ yields that
\begin{equation*}
\EE[\epsilon(\x_{t+1}) \mid \FCal_t] - \epsilon(\x_t) \ \leq \ \frac{\EE[\|\xi_{t+1}\|^2 \mid \FCal_t]}{\lambda\eta_{t+1}} + \left(1 - \frac{\lambda}{\eta_{t+1}}\right)\EE\left[\|\sv(\x_{t+1}) - \sv(\x_t)\|^2 \mid \FCal_t\right]. 
\end{equation*}
Since the noisy model~\eqref{model:noisy} is with absolute random noise~\eqref{model:noisy-absolute}, we have $\EE[\|\xi_{t+1}\|^2 \mid \FCal_t] \leq \sigma_t^2$. Also, $\eta_t \in (0, \lambda)$ for all $t \geq 1$. Therefore, we conclude that 
\begin{equation*}
\EE[\epsilon(\x_{t+1}) \mid \FCal_t] - \epsilon(\x_t) \ \leq \ \frac{\sigma_t^2}{\lambda\eta_{t+1}}.  
\end{equation*}
Taking the expectation of both sides yields the desired inequality.

\subsection{Proof of Theorem~\ref{Thm:noisy-OGD-absolute-last}}
Recalling Eq.~\eqref{inequality-noisy-OGD-absolute-key} (cf. Lemma~\ref{Lemma:key-inequality-noisy-OGD}), we take the expectation of both sides conditioned on $\FCal_t$ to obtain
\begin{equation*}
\EE[\|\x_{t+1} - \x^*\|^2 \mid \FCal_t] \ \leq \ \|\x_t - \x^*\|^2 - (2\lambda - 2\eta_{t+1})\eta_{t+1}\|\sv(\x_t)\|^2 + 2\eta_{t+1}^2\EE[\|\xi_{t+1}\|^2 \mid \FCal_t] + 2\eta_{t+1}\EE[(\x_t - \x^*)^\top\xi_{t+1} \mid \FCal_t]. 
\end{equation*}
Since the noisy model~\eqref{model:noisy} is with absolute random noise~\eqref{model:noisy-absolute} satisfying $\sigma_t^2 \in (0, \sigma^2]$ for some $\sigma^2 < +\infty$, we have $\EE[(\x_t - \x^*)^\top\xi_{t+1} \mid \FCal_t] = 0$ and $\EE[\|\xi_{t+1}\|^2 \mid \FCal_t] \leq \sigma^2$. Therefore, we have
\begin{equation*}
\EE[\|\x_{t+1} - \x^*\|^2 \mid \FCal_t] \ \leq \ \|\x_t - \x^*\|^2 - (2\lambda - 2\eta_{t+1})\eta_{t+1}\|\sv(\x_t)\|^2 + 2\eta_{t+1}^2\sigma^2. 
\end{equation*}
Since $\sum_{t=1}^\infty \eta_t^2 < \infty$, we have $\eta_t \rightarrow 0$ as $t \rightarrow +\infty$. Without loss of generality, we assume $\eta_t \leq \lambda$ for all $t$. Then we have
\begin{equation}\label{inequality-noisy-OGD-absolute-third}
\EE[\|\x_{t+1} - \x^*\|^2 \mid \FCal_t] \ \leq \ \|\x_t - \x^*\|^2 + 2\eta_{t+1}^2\sigma^2. 
\end{equation}
We let $M_t = \|\x_t - \x^*\|^2 + 2\sigma^2(\sum_{j > t} \eta_j^2)$ and obtain that $M_t$ is an nonnegative supermartingale. Then Doob's martingale convergence theorem shows that $M_n$ converges to an nonnegative and integrable random variable almost surely. Let $M_\infty = \lim_{t \rightarrow +\infty} M_t$, it suffices to show that $M_\infty = 0$ almost surely.
	
We first claim that \textit{every neighborhood $U$ of $\XCal^*$ is recurrent}: there exists a subsequence $\x_{t_k}$ of $\x_t$ such that $\x_{t_k} \rightarrow \XCal^*$ almost surely. Equivalently, there exists a Nash equilibria $\x^* \in \XCal^*$ such that $\|\x_{t_k} - \x^*\|^2 \rightarrow 0$ almost surely. To this end, we can define $M_t$ with such Nash equilibria. Since $\sum_{t=1}^\infty \eta_t^2 < \infty$, we have $\sum_{j > t} \eta_j^2 \rightarrow 0$ as $t \rightarrow +\infty$ and the following statement holds almost surely:
\begin{equation*}
\lim_{k \rightarrow +\infty} M_{t_k} \ = \ \lim_{k \rightarrow +\infty} \|\x_{t_k} - \x^*\|^2 \ = \ 0. 
\end{equation*}
Since the whole sequence converges to $M_\infty$ almost surely, we conclude that $M_\infty = 0$ almost surely. 
	
\paragraph{Proof of the recurrence claim:} Let $U$ be a neighborhood of $\XCal^*$ and assume to the contrary that, $\x_t \notin U$ for sufficiently large $t$ with positive probability. By starting the sequence at a later index if necessary and noting that $\sum_{t=1}^\infty \eta_t^2 < \infty$, we may assume that $\x_t \notin U$ and $\eta_t \leq \lambda/2$ for all $t$ without loss of generality. Thus, there exists some $c > 0$ such that $\|\sv(\x_t)\|^2 \geq c$ for all $t$. As a result, for all $\x^* \in \XCal^*$, we let $\psi_{t+1} = (\x_t - \x^*)^\top\xi_{t+1}$ and have
\begin{equation*}
\|\x_{t+1} - \x^*\|^2 \ \leq \ \|\x_t - \x^*\|^2 - \lambda c\eta_{t+1} + 2\eta_{t+1}\psi_{t+1} + 2\eta_{t+1}^2\|\xi_{t+1}\|^2.
\end{equation*}
Summing up the above inequality over $t = 0, 1, \ldots, T$ together with $\theta_t = \sum_{j=1}^t \eta_j$ yields that 
\begin{equation}\label{inequality-noisy-OGD-absolute-last-main}
\|\x_{T+1} - \x^*\|^2 \ \leq \ \|\x_0 - \x^*\|^2 - \lambda c\theta_{T+1} + 2\theta_{T+1}\left[\frac{\sum_{t=1}^{T+1} \eta_t \psi_t}{\theta_{T+1}} + \frac{\sum_{t=1}^{T+1} \eta_t^2\|\xi_t\|^2}{\theta_{T+1}}\right].
\end{equation}
Since the noisy model~\eqref{model:noisy} is with absolute random noise~\eqref{model:noisy-absolute} satisfying $\sigma_t^2 \in (0, \sigma^2]$ for some $\sigma^2 < +\infty$, we have $\EE[\psi_{t+1} \mid \FCal_t] = 0$. Furthermore, we obtain by taking the expectation of both sides of Eq.~\eqref{inequality-noisy-OGD-absolute-third} that 
\begin{equation*}
\EE[\|\x_{t+1} - \x^*\|^2] \ \leq \ \EE[\|\x_t - \x^*\|^2] + 2\eta_{t+1}^2\sigma^2,  
\end{equation*}
and the following inequality holds true for all $t \geq 1$: 
\begin{equation*}
\EE[\|\x_t - \x^*\|^2] \leq \|\x_0 - \x^*\|^2 + 2\sigma^2\sum_{j=1}^\infty \eta_j^2 \ < \ +\infty. 
\end{equation*}
Since $\|\x_t - \x^*\|^2 \geq 0$, we have $\|\x_t - \x^*\|^2 < +\infty$ almost surely. Therefore, $\EE[\|\psi_{t+1}\|^2 \mid \FCal_t] \leq \sigma^2\|\x_t - \x^*\|^2 < +\infty$. Then the law of large numbers for martingale differences yields that $\theta_{T+1}^{-1}(\sum_{t=1}^{T+1} \eta_t \psi_t) \rightarrow 0$ almost surely~\citep[Theorem~2.18]{Hall-2014-Martingale}. Furthermore, let $R_t = \sum_{j=1}^t \eta_j^2\|\xi_j\|^2$, then $R_t$ is a submartingale and 
\begin{equation*}
\EE[R_t] \ \leq \ \sigma^2\sum_{j=1}^t \eta_j^2 < \sigma^2\sum_{j=1}^\infty \eta_j^2 \ < \ +\infty. 
\end{equation*}
From Doob's martingale convergence theorem, $R_t$ converges to some random, finite value almost surely~\citep[Theorem~2.5]{Hall-2014-Martingale}. Putting these pieces together with Eq.~\eqref{inequality-noisy-OGD-absolute-last-main} yields that $\|\x_t - \x^*\|^2 \sim -\lambda c\tau_t \rightarrow -\infty$ almost surely, a contradiction. Therefore, we conclude that every neighborhood of $\XCal^*$ is recurrent.

\subsection{Proof of Theorem~\ref{Thm:noisy-OGD-absolute-average-rate}}
Since $\eta_t = c/\sqrt{t}$ for all $t \geq 1$, we have $\eta_t \rightarrow 0$ and $\eta_t \leq c$ for all $t \geq 1$. This implies that 
\begin{equation}\label{inequality-noisy-OGD-absolute-fourth}
\lambda\eta_{t+1} - \eta_{t+1}^2 \ \geq \ (\lambda - c)\eta_{t+1}. 
\end{equation}
Plugging Eq.~\eqref{inequality-noisy-OGD-absolute-fourth} into Eq.~\eqref{inequality-noisy-OGD-absolute-key} (cf. Lemma~\ref{Lemma:key-inequality-noisy-OGD}) yields that
\begin{equation*}
\|\x_{t+1} - \x^*\|^2 \ \leq \ \|\x_t - \x^*\|^2 - 2(\lambda - c)\eta_{t+1}\|\sv(\x_t)\|^2 + 2\eta_{t+1}(\x_t - \x^*)^\top\xi_{t+1} + 2\eta_{t+1}^2\|\xi_{t+1}\|^2. 
\end{equation*}
Using the same argument as in Theorem~\ref{Thm:noisy-OGD-absolute-last}, we have
\begin{equation}\label{inequality-noisy-OGD-absolute-fifth}
\EE[\|\x_{t+1} - \x^*\|^2 \mid \FCal_t] \ \leq \ \|\x_t - \x^*\|^2 - 2(\lambda - c)\eta_{t+1}\|\sv(\x_t)\|^2 + 2\eta_{t+1}^2\sigma^2.
\end{equation}
Taking the expectation of both sides of Eq.~\eqref{inequality-noisy-OGD-absolute-fifth} and rearranging yields that 
\begin{equation*}
\EE[\epsilon(\x_t)] \ \leq \ \frac{1}{2(\lambda - c)\eta_{t+1}}\left(\EE[\|\x_t - \x^*\|^2] - \EE[\|\x_{t+1} - \x^*\|^2]\right) + \frac{\eta_{t+1}\sigma^2}{\lambda - c}. 
\end{equation*}
Summing up the above inequality over $t=0, 1, \ldots, T$ and using $\eta_t = c/\sqrt{T+1}$ yields that 
\begin{equation*}
\EE\left[\sum_{t=0}^T\epsilon(\x_t)\right] \ \leq \ \frac{\|\x_0 - \x^*\|^2}{2(\lambda - c)\eta_1} + \sum_{t=1}^T \left(\frac{1}{\eta_{t+1}} - \frac{1}{\eta_t}\right)\frac{\EE[\|\x_t - \x^*\|^2]}{2(\lambda - c)} + \frac{\sigma^2}{\lambda-c}\left(\sum_{t=1}^{T+1} \eta_t\right). 
\end{equation*}
On the other hand, we have
\begin{equation*}
\EE[\|\x_{t+1} - \x^*\|^2] \ \leq \ \EE[\|\x_t - \x^*\|^2] + 2\eta_{t+1}^2\sigma^2. 
\end{equation*}
This implies that the following inequality holds for all $t \geq 1$: 
\begin{equation*}
\EE[\|\x_t - \x^*\|^2] \ \leq \ \|\x_0 - \x^*\|^2 + 2\sigma^2\left(\sum_{j=1}^t \eta_j^2\right) \ \leq \ \|\x_0 - \x^*\|^2 + 2\sigma^2 c^2\log(t+1). 
\end{equation*}
Therefore, we conclude that 
\begin{eqnarray*}
\EE\left[\sum_{t=0}^T\epsilon(\x_t)\right] & \leq & \frac{\|\x_0 - \x^*\|^2 + 2\sigma^2 c^2\log(T+1)}{2(\lambda-c)}\sum_{t=1}^T \left(\frac{1}{\eta_{t+1}} - \frac{1}{\eta_t}\right) + \frac{\|\x_0 - \x^*\|^2}{2(\lambda-c)\eta_1} + \frac{\sigma^2}{\lambda-c}\left(\sum_{t=1}^{T+1} \eta_t\right) \\
& \leq & \frac{\sqrt{T+1}(\|\x_0 - \x^*\|^2 + 2\sigma^2 c^2\log(T+1))}{2c(\lambda-c)} + \frac{\|\x_0 - \x^*\|^2}{2c(\lambda-c)} + \frac{\sigma^2 c\sqrt{T+1}}{\lambda-c}  \\
& = & O\left(\sqrt{T+1}\log(T+1)\right). 
\end{eqnarray*}
This completes the proof. 

\subsection{Proof of Theorem~\ref{Thm:noisy-OGD-absolute-last-rate}}
Using Lemma~\ref{Lemma:OGD-noisy-absolute-iterative}, we have
\begin{equation*}
\EE[\epsilon(\x_T)] \ \leq \ \EE[\epsilon(\x_t)] + \frac{1}{\lambda}\left(\sum_{j=t}^{T-1}\frac{\sigma_j^2}{\eta_{j+1}}\right) \ \leq \ \EE[\epsilon(\x_t)] + \frac{1}{\lambda\underline{\eta}}\left(\sum_{j=t}^{T-1} \sigma_j^2\right). 
\end{equation*}
Summing up the above inequality over $t = 0, 1, \ldots, T$ yields that 
\begin{equation*}
(T+1)\EE[\epsilon(\x_T)] \ \leq \ \sum_{t=0}^T \EE[\epsilon(\x_t)] + \frac{1}{\lambda}\left(\sum_{t=0}^{T-1} \sum_{j=t}^{T-1}\frac{\sigma_j^2}{\eta_{j+1}}\right) \ \leq \ \sum_{t=0}^T \EE[\epsilon(\x_t)] + \frac{1}{\lambda\underline{\eta}}\left(\sum_{t=0}^{T-1} (t+1)\sigma_t^2\right). 
\end{equation*}
On the other hand, the derivation in Theorem~\ref{Thm:noisy-OGD-absolute-average-rate} implies that 
\begin{eqnarray*}
\EE\left[\sum_{t=0}^T\epsilon(\x_t)\right] & \leq & \frac{\|\x_0 - \x^*\|^2}{2(\lambda-\overline{\eta})\eta_1} + \sum_{t=1}^T \left(\frac{1}{\eta_{t+1}} - \frac{1}{\eta_t}\right)\frac{\EE[\|\x_t - \x^*\|^2]}{2(\lambda-\overline{\eta})} + \frac{1}{\lambda-\overline{\eta}}\left(\sum_{t=1}^{T+1} \eta_t \sigma_t^2 \right) \\
& \leq & \frac{\|\x_0 - \x^*\|^2}{2(\lambda-\overline{\eta})\eta_1} + \sum_{t=1}^T \left(\frac{1}{\eta_{t+1}} - \frac{1}{\eta_t}\right)\frac{\EE[\|\x_t - \x^*\|^2]}{2(\lambda-\overline{\eta})} + \frac{\overline{\eta}}{\lambda-\overline{\eta}}\left(\sum_{t=1}^{T+1} \sigma_t^2 \right). 
\end{eqnarray*}
On the other hand, we have
\begin{equation*}
\EE[\|\x_{t+1} - \x^*\|^2] \ \leq \ \EE[\|\x_t - \x^*\|^2] + 2\eta_{t+1}^2\sigma_t^2. 
\end{equation*}
This implies that the following inequality holds for all $t \geq 1$: 
\begin{equation*}
\EE[\|\x_t - \x^*\|^2] \ \leq \ \|\x_0 - \x^*\|^2 + 2\overline{\eta}^2\left(\sum_{j=1}^t \sigma_j^2\right). 
\end{equation*}
Therefore, we conclude that 
\begin{eqnarray*}
\EE\left[\sum_{t=0}^T\epsilon(\x_t)\right] & \leq & \frac{\|\x_0 - \x^*\|^2 + 2\overline{\eta}^2(\sum_{t=1}^T \sigma_t^2)}{2(\lambda-\overline{\eta})}\sum_{t=1}^T \left(\frac{1}{\eta_{t+1}} - \frac{1}{\eta_t}\right) + \frac{\|\x_0 - \x^*\|^2}{2(\lambda-\overline{\eta})\eta_1} + \frac{\overline{\eta}}{\lambda-\overline{\eta}}\left(\sum_{t=1}^{T+1} \sigma_t^2\right) \\
& \leq & \frac{\|\x_0 - \x^*\|^2 + 2\overline{\eta}^2(\sum_{t=1}^T \sigma_t^2)}{2(\lambda-\overline{\eta})\underline{\eta}} + \frac{\|\x_0 - \x^*\|^2}{2(\lambda-\overline{\eta})\underline{\eta}} + \frac{\overline{\eta}}{\lambda-\overline{\eta}}\left(\sum_{t=1}^{T+1} \sigma_t^2\right) \\
& \leq & \frac{\|\x_0 - \x^*\|^2}{(\lambda-\overline{\eta})\underline{\eta}} + \left(1 + \frac{\overline{\eta}}{\underline{\eta}}\right)\frac{\overline{\eta}}{\lambda-\overline{\eta}}\left(\sum_{t=1}^{T+1} \sigma_t^2\right) \\
& \leq & \frac{\|\x_0 - \x^*\|^2}{(\lambda-\overline{\eta})\underline{\eta}} + \left(1 + \frac{\overline{\eta}}{\underline{\eta}}\right)\frac{\overline{\eta}}{\lambda-\overline{\eta}}\left(\sum_{t=1}^{T+1} (t+1)\sigma_t^2\right). 
\end{eqnarray*}
Putting these pieces together yields that 
\begin{eqnarray*}
\EE[\epsilon(\x_T)] & \leq & \frac{1}{T+1}\left[\sum_{t=0}^T \EE[\epsilon(\x_t)] + \frac{1}{\lambda\underline{\eta}}\left(\sum_{t=0}^{T-1} (t+1)\sigma_t^2\right)\right] \\ 
& \leq & \frac{1}{T+1}\left(\frac{\|\x_0 - \x^*\|^2}{(\lambda-\overline{\eta})\underline{\eta}} + \left(\frac{1}{\lambda\underline{\eta}} + \left(1 + \frac{\overline{\eta}}{\underline{\eta}}\right)\frac{\overline{\eta}}{\lambda-\overline{\eta}}\right)\left(\sum_{t=1}^{T+1} (t+1)\sigma_t^2\right) \right) \\
& \overset{\textnormal{Eq.~\eqref{condition:noisy-OGD-absolute}}}{=} & O(a(T)).  
\end{eqnarray*}
This completes the proof. 

\end{document}